\documentclass[11pt, a4paper%, draft
]{amsart}

\usepackage{amsfonts,graphics, amsthm, amsmath, amscd, amssymb,mathrsfs}
\usepackage[draft=false]{hyperref}
\usepackage[T1]{fontenc}
\usepackage{enumitem}
\usepackage{ae}
\usepackage[all]{xy}
\usepackage{xcolor}
\usepackage{booktabs}
\usepackage{tikz-cd}
\usepackage{tabularx}
\usepackage{booktabs}
\usepackage[title]{appendix}
\usepackage{tikz}
\usetikzlibrary{patterns}
\usepackage{float}

\usepackage{hyperref}

\renewcommand{\le}{\leqslant}
\renewcommand{\ge}{\geqslant}

\numberwithin{equation}{section} % Number equations by section
\newtheorem{theorem}{Theorem}[section] % Theorems numbered within sections
\newtheorem{lemma}{Lemma}[section] % Lemmas numbered within sections
\newtheorem{definition}{Definition}[section] % Definitions numbered within sections
\newtheorem{prop}{Proposition}[section]
\newtheorem{remark}[theorem]{Remark}

\numberwithin{equation}{section}
\theoremstyle{definition}

\renewcommand\rho{\varrho}

\title[Near Isospectrality and Spectral Rigidity for Compact Locally Symmetric Manifolds]{Near Isospectrality and Spectral Rigidity for Compact Locally Symmetric Manifolds}

\subjclass[2020]{58J53, 58C40, 53C24}
\keywords{Spectral rigidity, Locally symmetric spaces, Isospectral manifolds, Heat trace, Curvature invariants}

\author{Sudhir Pujahari}
\address{School of Mathematical Sciences, National Institute of Science Education and Research, Bhubaneswar, An OCC of Homi Bhabha National Institute,  P. O. Jatni,  Khurda 752050, Odisha, India.}
\email{spujahari@niser.ac.in}
\urladdr{https://sites.google.com/site/sudhirkumarpujahari/home}

\author{Punya Plaban Satpathy}
\address{Institute of Mathematics and Applications, Andharua, Bhubaneswar 751029, Odisha, India.}
\email{punya.ima@iomaorissa.ac.in/psatpathy81@gmail.com}

\begin{document}

\begin{abstract}
The inverse spectral problem asks to what extent the Laplace--Beltrami spectrum determines the geometry of a Riemannian manifold. We study a natural weakening, called \emph{near isospectrality}, in which the spectra of two compact manifolds agree outside a finite set, counted with multiplicity. We prove that for compact quotients of a fixed simply connected symmetric space of nonpositive sectional curvature, near isospectrality already forces full isospectrality. We then extend this rigidity to a broad collection of compact quotients of irreducible symmetric spaces of noncompact type. In this larger setting, near isospectrality determines enough heat invariants to identify the universal cover within the class under consideration, and the fixed-cover rigidity result then implies full isospectrality. Thus, within the class studied here, eventual agreement of the Laplace spectrum already forces complete spectral agreement.
\end{abstract}

\maketitle

\bigskip

%\tableofcontents

\section{Introduction and main results} \label{results}

Although the spectrum of the Laplace--Beltrami operator does not determine a compact Riemannian manifold up to isometry, strong forms of spectral rigidity can nevertheless arise in highly structured geometric settings. Classical counterexamples due to Milnor \cite{Milnor}, Vign\'eras \cite{vignerasmarie}, Gordon--Webb--Wolpert \cite{gordonwebb}, and Sunada \cite{sunada} show that isospectral spaces need not be isometric, while simultaneously motivating the search for rigidity phenomena beyond ordinary isospectrality.

In this paper we study a spectral phenomenon for compact locally symmetric manifolds. Let
\[
0=\lambda_0 \le \lambda_1 \le \lambda_2 \le \cdots \uparrow \infty
\]
denote the spectrum of the Laplace--Beltrami operator on a compact Riemannian manifold \(M\), counted with multiplicity. Although the terminology is not standard, for the purposes of this paper we shall say that two compact Riemannian manifolds \(M_1\) and \(M_2\) are \emph{nearly isospectral} if there exists \(N \ge 0\) such that
\[
\lambda_k^{(1)}=\lambda_k^{(2)}
\qquad \text{for all } k\ge N.
\]
Equivalently, the spectra differ in only finitely many eigenvalues. We investigate whether compact locally symmetric manifolds admit nontrivial finite perturbations of the Laplace spectrum. This question may be viewed as a spectral analogue of strong multiplicity one phenomena, in which agreement outside a finite exceptional set forces global equality.

There are two natural settings for this problem. One may first assume that the manifolds share a common universal cover. In this fixed-cover setting, analogues of strong multiplicity one phenomena have been studied extensively using Selberg-type trace formulas and representation-theoretic methods, both of which rely fundamentally on the presence of a fixed ambient Lie group.

The situation becomes considerably more difficult when the universal covers are allowed to vary. One may then ask whether near isospectrality forces the universal covers to coincide. Since the ambient Lie group is no longer fixed, the trace formula and representation-theoretic framework underlying the fixed-cover case no longer directly applies. Instead, we use heat-kernel methods together with the strong rigidity properties of heat invariants on locally symmetric manifolds. This allows us to derive the desired spectral rigidity directly from heat asymptotics, and hence to compare manifolds with different universal covers.

Indeed, eventual spectral agreement implies that the difference of the two heat traces is a finite linear combination of exponential terms, and hence extends real-analytically across \(t=0\). At the same time, the small-time asymptotic expansion of the heat trace encodes geometric information through the heat invariants. For locally symmetric manifolds these invariants are especially rigid: since the curvature tensor is parallel, these invariants are determined entirely by the local isometry type of the universal cover.

Our first result establishes a strong spectral rigidity phenomenon in the
fixed-cover setting: for compact quotients of a fixed simply connected
symmetric space of nonpositive sectional curvature, the Laplace spectrum admits no
nontrivial finite perturbations. In other words, eventual agreement of the
spectrum already forces complete spectral agreement.
\begin{theorem} \label{thm: main1}
 Let $X$ be a simply connected symmetric space of nonpositive sectional curvature. Let $\Gamma_1, \Gamma_2 \subset \operatorname{Isom}(X)$ be torsion-free uniform lattices, and set $M_i := \Gamma_i \backslash X$. Suppose that $M_1$ and $M_2$ are nearly isospectral. Then $M_1$ and $M_2$ are isospectral.
 \end{theorem}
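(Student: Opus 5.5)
The plan is to compare the heat traces $Z_i(t)=\sum_k e^{-t\lambda_k^{(i)}}$ of $M_1$ and $M_2$. Near isospectrality means that, after cancelling the common tail, we can write
\[
D(t):=Z_1(t)-Z_2(t)=\sum_{k<N}\bigl(e^{-t\lambda_k^{(1)}}-e^{-t\lambda_k^{(2)}}\bigr).
\]
This is a finite exponential sum, so it is entire in $t$. In particular $D(t)\to D(0)=0$ as $t\to0^+$: both sums have exactly $N$ terms, so the difference vanishes at $t=0$. The goal is to show $D\equiv 0$. Once that holds, linear independence of distinct exponentials gives equality of the finite multisets $\{\lambda_k^{(1)}\}_{k<N}$ and $\{\lambda_k^{(2)}\}_{k<N}$, and hence full isospectrality.

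The key step is the small-time expansion. Since $M_i$ is locally isometric to $X$ and $X$ has parallel curvature, each local heat invariant $u_j$ is a constant $c_j(X)$ depending only on $X$. The Minakshisundaram--Pleijel expansion therefore reads
\[
Z_i(t)\sim(4\pi t)^{-n/2}\operatorname{vol}(M_i)\sum_{j\ge0}c_j(X)t^j .
\]
Subtracting the two expansions gives
\[
D(t)\sim(4\pi t)^{-n/2}\bigl(\operatorname{vol}M_1-\operatorname{vol}M_2\bigr)\sum_j c_jt^j .
\]
Because $D$ is bounded near $0$ and $c_0=1$, the leading term forces $\operatorname{vol}M_1=\operatorname{vol}M_2$ (this also follows directly from Weyl's law). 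Consequently $D(t)=O(t^\infty)$ as $t\to0^+$. An entire function that vanishes to infinite order at $0$ has all its Taylor coefficients equal to zero. Thus $D\equiv0$.

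The main obstacle is to justify that the heat expansion is genuinely uniform in the sense used here, namely that all coefficients are $\operatorname{vol}(M_i)$ times constants of $X$. This holds because the heat kernel coefficients $u_j(x,x)$ are universal polynomials in the curvature and its covariant derivatives at $x$, and these quantities are invariant under local isometries of the homogeneous space $X$. The other delicate point is the remainder: the asymptotic expansion is exact to every order $O(t^{K})$. Hence, once the volumes agree, the remainders in $D$ are $O(t^K)$ for every $K$, which is exactly the infinite-order vanishing needed. A less elementary alternative is to use the Selberg trace formula together with the spectral side of strong multiplicity one, but the heat-trace argument above avoids it. Nonpositive curvature is not actually used beyond guaranteeing that the quotients are compact manifolds with universal cover $X$.
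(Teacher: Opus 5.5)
Your argument is correct, but it takes a different route from the paper. The paper proves Proposition~\ref{prop: heat trace decomposition}. It combines the periodization formula $H_M(x,y,t)=\sum_{\gamma}H_X(\tilde x,\gamma\tilde y,t)$, the Gaussian upper bound of Lemma~\ref{heat kernel estimate}, and the exponential orbit growth of Proposition~\ref{lemma:orbit growth} (via \v{S}varc--Milnor) to show that $Z_{M_i}(t)=\mathrm{Vol}(M_i)\,H_X(o,o,t)+O\bigl(t^{-d/2}e^{-L^2/(12t)}\bigr)$. Once Weyl's law gives equal volumes, the difference $E(t)$ is therefore exponentially small, and analyticity plus a Vandermonde argument finish the proof.

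You bypass the global heat kernel on $X$ entirely. Both quotients share the constant local invariants $u_j(X)$, so $\tilde u_j(M_i)=\mathrm{vol}(M_i)\,u_j(X)$ together with equal volumes makes the two Minakshisundaram--Pleijel expansions agree term by term. That already gives $D(t)=O(t^\infty)$, which is exactly what the analyticity step needs. This is the same coefficient-matching mechanism the paper uses in the proof of Theorem~\ref{thm: main2}, pushed to all orders because the cover is fixed.

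Your route is shorter and more general. It needs no periodization, no Gaussian bounds and no orbit counting. As you note, it uses neither nonpositive curvature nor the symmetric structure beyond local homogeneity, so it applies verbatim to, e.g., spherical space forms of a fixed dimension. The paper's route gives a sharper quantitative statement: the discrepancy is $O\bigl(t^{-d/2}e^{-L^2/(12t)}\bigr)$, with $L$ a minimal displacement of $\Gamma$, which is the structure underlying Selberg-type trace formulas. For this theorem, however, only the infinite-order vanishing is used.
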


 \begin{remark}
   The proof relies on expressing the heat kernel on each compact quotient in
terms of the minimal positive heat kernel on the common universal cover via the
periodization formula. Gaussian upper bounds then yield exponential decay
of the off-diagonal orbit sums as $t\to0^+$. Since near isospectrality implies
that the difference of heat traces is a finite linear combination of
exponentials, this difference extends real-analytically across $t=0$; the
exponential decay forces the heat traces to agree identically.
 \end{remark}
\newcommand{\col}{\mathscr{C}}

In the varying-cover setting, we consider compact quotients of irreducible symmetric spaces of noncompact type in the classical families. After excluding certain low-dimensional accidental isomorphisms among the underlying symmetric spaces, near isospectrality forces equality of the first nontrivial heat invariants. For locally symmetric manifolds, these become explicit curvature expressions determined by the Lie-theoretic data of the universal cover. 

For the classical families under consideration (see Section~\ref{sec: heat invariant calculation}), these invariants reduce to explicit rational functions of the parameters indexing the symmetric spaces, namely \(n\) or \((p,q)\). Thus the first few heat invariants suffice to distinguish the universal covers within the families considered here.

To state the next result, let \(\col\) denote a fixed set of representatives of the irreducible symmetric spaces of noncompact type belonging to the classical families
\[
\mathrm{AI},\ \mathrm{AII},\ \mathrm{AIII},\ \mathrm{BDI},\ \mathrm{DIII},\ \mathrm{CI},\ \mathrm{CII},
\]
(~\cite[Ch.~X]{helgason1}; see Definition~\ref{the core collection}), each equipped with its standard invariant metric.
Throughout the paper we impose the lower-dimensional restrictions specified in Definition~\ref{the core collection} to exclude accidental isomorphisms and ensure that the heat invariants \(u_1\) and \(u_2\), defined in Definition~\ref{algebraic invariants of symmetric spaces}, distinguish the spaces under consideration.

\begin{theorem}\label{thm: main2}
Let \(X_1, X_2 \in \col\), and assume that \((X_1,X_2)\) does not belong, up to order, to one of the mixed-family configurations
\[
(\mathrm{AIII},\mathrm{BDI}),\qquad
(\mathrm{BDI},\mathrm{CII}),\qquad
(\mathrm{AIII},\mathrm{CII}).
\]
Let \(\Gamma_i \subset \mathrm{Isom}(X_i)\) be torsion-free uniform lattices and \(M_i:=\Gamma_i\backslash X_i\). If \(M_1\) and \(M_2\) are nearly isospectral, then \(X_1=X_2\). Moreover, \(M_1\) and \(M_2\) are isospectral.
\end{theorem}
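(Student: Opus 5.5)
The plan is to compare heat traces, read off the first heat invariants, use them to identify the universal covers, and then invoke Theorem~\ref{thm: main1}. Write $Z_i(t)=\sum_k e^{-\lambda_k^{(i)}t}$ for the heat trace of $M_i$. Near isospectrality means that only finitely many eigenvalues differ, so
\[
Z_1(t)-Z_2(t)=\sum_{j=1}^{m} c_j e^{-\mu_j t}
\]
is a finite exponential sum. In particular it extends to an entire function of $t$ and is bounded as $t\to0^+$.

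On the other hand, the Minakshisundaram--Pleijel expansion gives
\[
Z_i(t)\sim(4\pi t)^{-d_i/2}\operatorname{vol}(M_i)\sum_{k\ge0}u_k(X_i)\,t^k,
\]
where the coefficients $u_k(X_i)$ are constant because the curvature tensor is parallel. They are the invariants of Definition~\ref{algebraic invariants of symmetric spaces}, for example $u_0=1$ and $u_1=\tau/6$ with $\tau$ the scalar curvature. I will compare the two expansions in stages.
\begin{enumerate}
\item Since the difference $Z_1-Z_2$ is bounded, the leading singular terms must cancel. This forces $d_1=d_2=:d$ and $\operatorname{vol}(M_1)=\operatorname{vol}(M_2)$. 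If $d$ is odd, or if $d$ is even but we look at the coefficients of $t^{k-d/2}$ with $k<d/2$, these terms are singular, so they must also match. This gives $u_k(X_1)=u_k(X_2)$ for all $k<d/2$.
\item When $d$ is even, the coefficient of $t^{k-d/2}$ for $k\ge d/2$ is not singular. Here I expect to argue as in the proof of Theorem~\ref{thm: main1}. The off-diagonal orbit sums are $O(e^{-c/t})$, so each $Z_i$ equals $\operatorname{vol}(M_i)$ times the local heat kernel trace $h_{X_i}(t,o,o)$ up to exponentially small error. Moreover $h_{X_i}(t,o,o)$ is real-analytic in $t>0$, with an explicit Plancherel-integral expression. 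Since the difference is analytic and equals this local difference up to $O(e^{-c/t})$, all coefficients of the asymptotic expansions must agree.
\end{enumerate}
Since the symmetric spaces in $\col$ have dimension well above $4$ once the low-dimensional restrictions of Definition~\ref{the core collection} are imposed, step~(1) already yields $u_1(X_1)=u_1(X_2)$ and $u_2(X_1)=u_2(X_2)$, and step~(2) is only a fallback.

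The next step is to use the explicit formulas from Section~\ref{sec: heat invariant calculation}. Those formulas express $d$, $u_1$ (essentially the normalized scalar curvature) and $u_2$ (a combination of $\tau^2$, $|\mathrm{Ric}|^2$ and $|R|^2$) as rational functions of $n$ or $(p,q)$ within each family. Equality of $(d,u_1,u_2)$ gives a system of polynomial equations. Within a single family I will show that the map (parameters) $\mapsto(d,u_1,u_2)$ is injective on the admissible range. For pairs of different families I will show that the systems have no solutions, except in the excluded configurations $(\mathrm{AIII},\mathrm{BDI})$, $(\mathrm{BDI},\mathrm{CII})$ and $(\mathrm{AIII},\mathrm{CII})$. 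Together these give $X_1=X_2$.

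With the universal covers identified, both $\Gamma_i$ are torsion-free uniform lattices in $\operatorname{Isom}(X)$ for the same $X$, and Theorem~\ref{thm: main1} gives full isospectrality.

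The main obstacle is the Diophantine case analysis in the previous paragraph. It requires checking many pairs of families, and within families like $\mathrm{AIII}$ or $\mathrm{BDI}$ the two-parameter invariants might collide along curves. For this I would first use $d$ to eliminate one parameter, for example $d=2pq$ for $\mathrm{AIII}$. This reduces the problem to comparing rational functions of a single variable, and I would then compare the $u_1$ and $u_2$ normalizations by clearing denominators and checking monotonicity or the finitely many integer roots.
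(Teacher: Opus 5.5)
Your proposal is correct and follows the paper's proof essentially step for step. You get equal dimension and volume from the leading singular term, where the paper cites Weyl's law. You then get vanishing of the singular coefficients for $j=0,1,2$ because $d\ge 6$, identify $X_1=X_2$ through the $(\dim,|\mathrm{Riem}|^2)$ case analysis that the paper packages as Theorem~\ref{curvature invariant determine spaces}, and finish with Theorem~\ref{thm: main1}. Drop your step (2): it is not needed, and as written it is not justified. For even $d$, analyticity only matches the coefficients of $t^{k-d/2}$ with $k\ge d/2$ to the Taylor coefficients of the exponential sum at $t=0$, which are generally nonzero, so it does not force $u_k(X_1)=u_k(X_2)$.
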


\begin{remark}
Equality of the invariants leads to Diophantine systems, and we show that, apart from the mixed-family configurations listed in Theorem~\ref{thm: main2}, these systems have no admissible integral solutions. The excluded configurations are exactly the remaining cases not ruled out by the invariant-separation step in Section~\ref{sec: heat invariant calculation}.
\end{remark}

\medskip
These results may be viewed as part of a broader circle of rigidity phenomena closely related to strong multiplicity one principles in number theory and representation theory, where agreement outside a finite set often forces global equality.

In the setting of locally symmetric manifolds, rigidity phenomena of this type have been studied extensively. In the two-dimensional compact hyperbolic case, related rigidity phenomena are well known; see Huber’s work \cite{huber1, Huber2, Huber3} and the expositions in Buser \cite[Ch. 9]{buser} and Chavel \cite[Ch. XI]{Chavel}. Classical work of Elstrodt, Grunewald and Mennicke \cite{Elstrodt} established analogous rigidity results for compact hyperbolic \(3\)-manifolds using analytic trace formula methods. More generally, Bhagwat and Rajan \cite{bhagwatrajanstrongmult} established a spectral version of the strong multiplicity one theorem for uniform lattices in semisimple Lie groups: if all but finitely many irreducible spherical representations occur with the same multiplicities, then the spherical spectra coincide. As a corollary, it follows that compact rank-one locally symmetric manifolds that are nearly isospectral are isospectral. Related representation--spectral converse results, involving representation equivalence and the joint spectrum of invariant differential operators, were studied in \cite{bhagwattau,kelmer,lauretmiatello,Pesce}.

The representation-theoretic results above operate at the level of representation multiplicities, and hence, in the spherical setting, at the level of the full joint spectrum of \(G\)-invariant differential operators; in rank one this reduces to the scalar Laplace spectrum, but in higher rank it is strictly stronger than near isospectrality for the scalar Laplacian. The present work instead allows the ambient symmetric space to vary and relies on heat-kernel methods rather than trace formula techniques.

% In the setting of locally symmetric spaces, such rigidity phenomena have been studied extensively. Classical work of Elstrodt \cite{elstrodt} established analogous rigidity results for compact hyperbolic \(3\)-manifolds, while Bhagwat and Rajan \cite{bhagwatrajanstrongmult} proved a strong multiplicity one theorem for lattices in semisimple Lie groups, implying in particular that compact rank-one locally symmetric spaces which are nearly isospectral are necessarily isospectral. Related representation--spectral converse results appear in \cite{bhagwattau,lauretmiatello,Pesce,kelmer}.

% The representation-spectral converse results of Bhagwat--Rajan and related work operate at the level of representation multiplicities, and hence of the full joint spectrum of \(G\)-invariant differential operators; in rank one this reduces to the scalar Laplace spectrum, but in higher rank it is strictly stronger than near isospectrality for the scalar Laplacian.

% The present work instead allows the ambient symmetric space to vary and relies on heat-kernel methods rather than trace formula techniques.

The paper is organized as follows. In Section~\ref{heat kernel preliminaries}
we recall the necessary geometric and spectral preliminaries. Section~\ref{proof: thm main 1}
contains the proof of Theorem~\ref{thm: main1}. In Section~\ref{sec: heat invariant calculation}
we analyze the heat invariants distinguishing the symmetric spaces in \(\col\)
and in the following section we prove Theorem~\ref{thm: main2}. An appendix records the Lie-theoretic data for the classical families together with a detailed analysis of the Diophantine systems arising in the invariant-separation arguments.

Throughout the paper, \(X\) will denote a simply connected Riemannian manifold.
In Sections~\ref{heat kernel preliminaries} and \ref{proof: thm main 1}, \(X\) is a symmetric space of nonpositive sectional curvature; in later sections we assume that \(X\) is an irreducible symmetric space of noncompact type unless stated otherwise.

\newcommand{\nn}{X}
\section{Preliminaries on heat kernel} \label{heat kernel preliminaries}

In this section, we discuss the heat kernel on simply connected symmetric spaces and their compact Riemannian quotients. For foundational results concerning heat kernels on complete Riemannian manifolds, we refer the reader to Grigor'yan ~\cite{grigoryan} and Li ~\cite{Li2012Geometric}. Throughout the paper, we adopt the analytical convention for the Laplace--Beltrami operator, defining it as a non-negative operator
\[
\Delta = -\operatorname{div}\circ\operatorname{grad},
\]
as in Rosenberg \cite{rosenbergbook}, and we adjust the standard signs in the heat equation accordingly. We use the curvature convention
\[
R(X,Y)Z
=
\nabla_X\nabla_Y Z
-
\nabla_Y\nabla_X Z
-
\nabla_{[X,Y]}Z ,
\]
which differs from the convention in \cite{rosenbergbook} by an overall sign.

\subsection{The Minimal heat kernel on $X$}

Let $(X,\tilde{g})$ be a simply connected symmetric space of nonpositive sectional curvature and dimension $d$. Then $X$ is a Hadamard manifold and its curvature tensor is parallel $(\nabla  R  = 0)$. We define the Laplace--Beltrami operator as the non-negative operator \[ \tilde\Delta := -\operatorname{div}_{\nn} \circ \operatorname{grad}_{\nn}. \]

A central object in analyzing the spectral theory of the Laplacian is the heat kernel.

\begin{definition}\label{heat kernel definition}
A heat kernel on \(X\) is a smooth function
\[
H : X \times X \times (0,\infty) \to \mathbb{R}
\]
such that
\begin{enumerate}
\item \(\displaystyle \frac{\partial H}{\partial t}+\tilde\Delta_x H=0\) on \(X\times X\times(0,\infty)\),
\item for every \(x\in X\) and every \(f\in C_c^\infty(X)\),
\[
\lim_{t\to 0^+}\int_{X} H(x,y,t)f(y)\,d\mathrm{vol}_{\nn}(y)=f(x).
\]
\end{enumerate}
\end{definition}
\begin{remark}
The above definition applies to any complete Riemannian manifold without boundary, both compact and noncompact. In particular, we will use the same definition for compact quotients of $X$.
\end{remark}
% \begin{prop}[{\cite[Theorem 12.4]{Li2012Geometric}}]
% \label{prop:minimal heat kernel}
% There exists a unique minimal positive heat kernel $H(x,y,t)$ on $X$.
% {\color{red} SHOULD WE RESTATE IT?}
% This kernel is smooth and satisfies:
% \begin{enumerate}
% \item Positivity and symmetry: $H(x,y,t) > 0$ and $H(x,y,t) = H(y,x,t)$.
% \item Minimality: If $P(x,y,t)$ is any other non-negative function satisfying
% the heat equation and the initial condition in Definition ~\ref{heat kernel definition},
% then
% $$
% H(x,y,t) \leq P(x,y,t) \ \text{for all $x,y \in X$ and $t > 0$}.
% $$
% \end{enumerate}
% \end{prop}

% \begin{definition}
%     We denote by $H_{\nn}(x,y,t)$ the minimal positive heat kernel on $\nn$ whose existence is established in Proposition ~\ref{prop:minimal heat kernel}.
% \end{definition}

By standard heat kernel theory, there exists a unique minimal positive heat
kernel $H_X(x,y,t)$ on $X$; see (\cite[Theorem 12.4]{Li2012Geometric}). This kernel is smooth, positive,
symmetric, and minimal among all nonnegative fundamental solutions of the
heat equation.

The following result shows that $H_{\nn}(x,y,t)$ only depends on $t$ when $x=y$.

 \begin{lemma} \label{diagonal heat kernel constant}
 
 For any $x \in \nn$, the value of the heat kernel on the diagonal, $H_{\nn}(x, x, t)$, is a function of $t$ alone.\end{lemma}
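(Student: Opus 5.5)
The plan is to exploit homogeneity of $X$ together with uniqueness (minimality) of the heat kernel. Since $X$ is a simply connected Riemannian symmetric space, its isometry group $\operatorname{Isom}(X)$ acts transitively: for any $x_0,x\in X$ there is an isometry $\phi$ with $\phi(x_0)=x$. This follows by composing geodesic symmetries, for instance the symmetry at the midpoint of a geodesic from $x_0$ to $x$, which exists because $X$ is complete. It then suffices to show that the minimal heat kernel is invariant under isometries, i.e.
\[
H_X(\phi(x),\phi(y),t)=H_X(x,y,t)\qquad\text{for all }\phi\in\operatorname{Isom}(X),
\]
because setting $y=x=x_0$ gives $H_X(x,x,t)=H_X(x_0,x_0,t)$, which is independent of $x$.

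To prove the invariance, fix an isometry $\phi$ and define $\tilde H(x,y,t):=H_X(\phi(x),\phi(y),t)$. I would check three things.

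\begin{enumerate}
\item \textbf{Heat equation.} Isometries commute with the Laplace--Beltrami operator, so $\tilde\Delta_x(f\circ\phi)=(\tilde\Delta f)\circ\phi$. Hence $\tilde H$ is smooth, positive, and satisfies $\partial_t\tilde H+\tilde\Delta_x\tilde H=0$.
\item \textbf{Initial condition.} Since $\phi$ preserves the Riemannian volume, the change of variables $z=\phi(y)$ gives
\[
\int_X\tilde H(x,y,t)f(y)\,d\mathrm{vol}(y)=\int_X H_X(\phi(x),z,t)\,(f\circ\phi^{-1})(z)\,d\mathrm{vol}(z)\longrightarrow f(\phi^{-1}(\phi(x)))=f(x).
\]
Here we use that $f\circ\phi^{-1}\in C_c^\infty(X)$.
\item \textbf{Minimality.} $\tilde H$ is a nonnegative fundamental solution, so minimality gives $H_X\le\tilde H$. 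Applying the same argument with $\phi^{-1}$ in place of $\phi$ gives $H_X(\phi^{-1}x,\phi^{-1}y,t)\ge H_X(x,y,t)$ for all $x,y$. Substituting $x\mapsto\phi x$ and $y\mapsto\phi y$ turns this into $H_X\ge\tilde H$, so $\tilde H=H_X$.
\end{enumerate}

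An alternative to step 3 is the exhaustion construction of the minimal kernel as the increasing limit of Dirichlet heat kernels on balls $B(x_0,R)$. An isometry carries such a ball to the ball of the same radius centred at $\phi(x_0)$, and the limit does not depend on the chosen exhaustion. The minimality argument above is shorter, however.

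The only genuinely delicate point is transitivity of $\operatorname{Isom}(X)$, but this is standard for Riemannian symmetric spaces and I would simply cite it, e.g. from Helgason. Everything else is formal once uniqueness of the minimal positive heat kernel is in hand.
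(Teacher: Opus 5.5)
Your proposal is correct and follows the same route as the paper: invariance of $H_X$ under isometries, combined with transitivity of $\operatorname{Isom}(X)$. The paper simply cites Grigor'yan's Theorem 9.12 for the invariance, whereas you derive it directly from minimality (your two-sided comparison using $\phi$ and $\phi^{-1}$ is valid) and also justify transitivity via the geodesic symmetry at a midpoint.
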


 \begin{proof}
By \cite[Theorem 9.12]{grigoryan} the heat kernel $H_{\nn}(x,y,t)$ is invariant under isometries: for any $\phi \in \mathrm{Isom}(\nn)$, 
\[
H_{\nn}(\phi(x),\phi(y),t) = H_{\nn}(x,y,t).
\] 
Since $\nn$ is homogeneous, for any $x,y \in \nn$ there exists $\phi \in \mathrm{Isom}(\nn)$ with $\phi(x)=y$, which immediately gives
\[
H_{\nn}(x,x,t) = H_{\nn}(\phi(x),\phi(x),t) = H_{\nn}(y,y,t).
\] 
 \end{proof}

 \begin{lemma} {\normalfont(cf. \cite[Ch. 8, Eq. (7.19)]{grigoryanLMSnotes})} \label{heat kernel estimate} There exists a constant $C > 0$ depending on $d := \operatorname{dim}(\nn)$ and the geometry of $\nn$ such that the minimal positive heat kernel $H_{\nn}(x,y,t)$ satisfies:$$H_{\nn}(x,y,t) \le Ct^{-d/2} \exp\left( -\frac{d_{\nn}(x, y)^2}{6t} \right)$$for all $x, y \in \nn$ and $t \in (0,1)$.\end{lemma}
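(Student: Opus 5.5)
The plan is to derive the bound from the Li--Yau Gaussian upper estimate for complete manifolds with Ricci curvature bounded below, and to control its volume factor with a comparison theorem that uses nonpositive curvature. The Li--Yau estimate has the generic form $d(x,y)^2/((4+\varepsilon)t)$ in the exponent, so the specific constant $6$ should come out by taking $\varepsilon=2$.

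\textbf{Step 1 (Ricci lower bound).} Since $X$ is a symmetric space, its curvature tensor is parallel and $\mathrm{Isom}(X)$ acts transitively. The curvature tensor at one point is a fixed finite-dimensional object, and it is carried to every other point by isometries. Hence all sectional curvatures lie in some interval $[-K,0]$, and in particular $\mathrm{Ric}\ge -(d-1)K\,\tilde g$ on all of $X$.

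\textbf{Step 2 (Li--Yau upper bound).} Apply the Li--Yau estimate (see \cite{Li2012Geometric}) to the minimal heat kernel on a complete manifold with $\mathrm{Ric}\ge -(d-1)K$. For every $\varepsilon>0$ there is $C(\varepsilon,d)$ such that
\[
H_X(x,y,t)\le C(\varepsilon,d)\, V(x,\sqrt t)^{-1/2}V(y,\sqrt t)^{-1/2}
\exp\Bigl(-\frac{d_X(x,y)^2}{(4+\varepsilon)t}+C(\varepsilon,d)Kt\Bigr).
\]
Take $\varepsilon=2$ and restrict to $t\in(0,1)$. Then the factor $e^{CKt}$ is bounded by $e^{CK}$, a constant depending only on $d$ and the geometry of $X$. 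The exponent becomes $-d_X(x,y)^2/(6t)$, as required.

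\textbf{Step 3 (volume lower bound).} Because $X$ is a Hadamard manifold, $\exp_x$ is a diffeomorphism, so the injectivity radius is infinite. Günther's volume comparison under sectional curvature $\le 0$ then gives
\[
V(x,r)\ge \omega_d r^d \quad \text{for all } r>0 .
\]
Consequently $V(x,\sqrt t)^{-1/2}V(y,\sqrt t)^{-1/2}\le \omega_d^{-1}t^{-d/2}$. Combining this with Step 2 yields the stated inequality with $C=\omega_d^{-1}C(2,d)e^{C(2,d)K}$.

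\textbf{Main obstacle and fallback.} The delicate point is Step 2: the Li--Yau theorem must apply to the \emph{minimal} heat kernel, and the dependence of its constants on $\varepsilon$ must be tracked so that $\varepsilon=2$ is admissible. Both hold, since the theorem is proved for the minimal kernel on complete manifolds and holds for every $\varepsilon>0$.

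If one prefers Grigor'yan's route from \cite{grigoryanLMSnotes}, the argument changes as follows.
\begin{enumerate}
\item Use Step 3 and the Euclidean-type isoperimetric inequality on Hadamard manifolds to obtain a Faber--Krahn inequality on small balls. This gives the on-diagonal bound $H_X(x,x,t)\le Ct^{-d/2}$ for $t<1$; by Lemma~\ref{diagonal heat kernel constant} it suffices to do this at one point.
\item Upgrade to the off-diagonal bound via the integrated maximum principle. One bounds the weighted integrals $E_D(x,t)=\int_X H_X(x,z,t)^2e^{d(x,z)^2/(Dt)}\,dz$ for $D>2$.
\item Conclude using the semigroup identity $H_X(x,y,t)=\int_X H_X(x,z,t/2)H_X(z,y,t/2)\,dz$, the Cauchy--Schwarz inequality, and the elementary inequality $d(x,y)^2\le 2\bigl(d(x,z)^2+d(z,y)^2\bigr)$.
\end{enumerate}
On this route, choosing $D$ appropriately produces the exponent $-d_X(x,y)^2/(6t)$.
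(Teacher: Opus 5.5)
Your proof is correct, but your main route is not the one the paper relies on. The paper gives no argument of its own. It quotes Grigor'yan's Gaussian upper bound \cite[Ch.~8, Eq.~(7.19)]{grigoryanLMSnotes}, which comes from the Faber--Krahn machinery, i.e.\ essentially your fallback: an on-diagonal bound from a Faber--Krahn inequality, then an off-diagonal factor $e^{-d^2/(Dt)}$ for any $D>4$ via weighted $L^2$ estimates and the semigroup property.

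Your primary route instead combines the Li--Yau estimate with G\"unther's volume comparison. Its advantage is that it needs only two classical theorems and very little about $X$: a lower Ricci bound, which you correctly obtain from homogeneity, and $V(x,r)\ge\omega_d r^d$. The price is the factor $e^{CKt}$, which confines the estimate to bounded times. That is harmless here, since the paper only ever uses $t\in(0,1)$. The Faber--Krahn route needs no lower curvature bound and, on a Hadamard manifold, gives the Gaussian bound for all $t>0$.

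Two small clarifications. First, Li--Yau is often stated only for $0<\varepsilon<1$, but you need not check whether $\varepsilon=2$ is admissible: any admissible $\varepsilon\le 2$ already gives $\exp(-d^2/((4+\varepsilon)t))\le \exp(-d^2/(6t))$. Second, in the fallback, the \emph{sharp} Euclidean isoperimetric inequality on Hadamard manifolds is the Cartan--Hadamard conjecture. Faber--Krahn, however, only requires a version with a non-sharp dimensional constant (e.g.\ from the Hoffman--Spruck Sobolev inequality), and that is known in every dimension.
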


\subsection{Short-time asymptotics and invariants}

For the remainder of the section we work with a compact quotient of $\nn$. More precisely, let $\Gamma$ be a torsion-free uniform lattice in $\operatorname{Isom}(\nn)$, and define $M := \Gamma \backslash \nn$. Then $M$ is a smooth compact manifold, and its Riemannian structure is given by a metric $g$ inherited from the metric $\tilde{g}$ on $\nn$. Let $\Delta$ be the Laplace-Beltrami operator associated to $(M,g)$.

Since $M$ is compact and has no boundary, $\Delta$ has discrete spectrum
\[
0 = \lambda_0 < \lambda_1 \le \lambda_2 \le \cdots \to \infty,
\]
where $\lambda_0 = 0$ is simple because $M$ is connected. There exists an orthonormal basis of $L^2(M)$ consisting of eigenfunctions $\{\varphi_j\}_{j=0}^\infty$ such that
\[
\Delta \varphi_j = \lambda_j \varphi_j.
\]

Because of the compactness of $M$, there exists a unique heat kernel
$H_M(x,y,t)$ on $M$ satisfying conditions (1) and (2) of
Definition~\ref{heat kernel definition}.

Moreover, $H_M(x,y,t)$ admits the expansion
(~\cite[Proposition~3.1]{rosenbergbook})
\[
H_M(x,y,t)
=
\sum_{j=0}^\infty e^{-\lambda_j t}\varphi_j(x)\varphi_j(y),
\]
which converges smoothly for $t>0$.
% Because of the compactness of $M$, there exists a unique heat kernel $H_M(x,y,t)$ on $M$ satisfying conditions (1) and (2) of Definition ~\ref{heat kernel definition}, and it admits the expansion (cf. \cite[Proposition ~3.1]{rosenbergbook})
% \[
% H_M(x,y,t) = \sum_{j=0}^\infty e^{-\lambda_j t}\varphi_j(x)\varphi_j(y),
% \]
% % % \footnote{On a compact manifold, the heat kernel is the integral kernel of the heat semigroup $e^{-t\Delta}$ on $L^2(M)$. In particular, the initial condition can equivalently be formulated in the $L^2$-sense.}
% which converges smoothly for $t>0$. 
The diagonal values of $H_M$ admit the following short-time asymptotic expansion (see \cite[Proposition ~3.23]{rosenbergbook}):
\begin{equation} \label{short time asymptotics heat kernel}
    H_M(x,x,t) \sim \frac{1}{(4\pi t)^{d/2}} \sum_{j=0}^{\infty} u_j(x,x)t^j \quad \text{as } t \to 0^+.
\end{equation}
Each coefficient $u_j(x,x)$ can be expressed as a universal polynomial in the Riemann curvature tensor of $M$ and its covariant derivatives (~\cite[Lemma 3.26]{rosenbergbook}). By standard properties of the heat kernel, the leading coefficient is $u_0(x,x) = 1$, reflecting the local Euclidean behavior of the manifold.

Recall that the universal cover $X$ of $M$ is a simply connected symmetric space, so its curvature tensor is parallel, $\nabla R = 0$. Since $M=\Gamma\backslash X$, the covering map $X \to M$ is a local isometry, and therefore the curvature tensor of $M$ at $x$ agrees with that of $X$ at any lift $\tilde{x}$. The coefficients $u_j(x,x)$ are universal polynomials in the curvature tensor and its covariant derivatives; hence the coefficient computed at $x \in M$ agrees with the coefficient computed at any lift $\tilde{x} \in X$. Because all covariant derivatives of $R$ vanish on $X$, this common value is independent of $x$ and of the choice of torsion-free uniform lattice $\Gamma$. We denote it by $u_j(X)$. In this sense, the heat invariants are geometric invariants of the universal cover.

 \begin{definition} \label{algebraic invariants of symmetric spaces}
Let $X$ be a simply connected symmetric space of nonpositive sectional curvature, and let
$M = \Gamma \backslash X$ be a compact quotient by a torsion-free uniform lattice.
The diagonal heat kernel on $M$ admits the short-time asymptotic expansion
\begin{equation*}
H_M(x,x,t) \sim  \frac{1}{(4\pi t)^{d/2}} \sum_{j=0}^{\infty} u_j t^j
\quad \text{as } t \to 0^+,
\end{equation*}
where $u_0 = 1$ and the coefficients $u_j$ are independent of $x \in M$
and of the choice of lattice $\Gamma$. We define the \textbf{heat invariants}
of $X$, denoted $u_j(X)$, to be these coefficients $u_j$.
\end{definition}

\begin{remark}
    These invariants are the natural local heat coefficients;
compare Gilkey's normalized coefficients \(\mathcal{A}_n(\mathcal{M})\) for compact locally
symmetric spaces \cite{Gilkeymiatello}.
\end{remark}

\subsection{Heat trace decomposition} By integrating $H_M(x,x,t)$ over $M$, we obtain the trace of the heat semigroup, which directly bridges the spectrum and the geometry of $M$:
\[
Z_M(t):=\operatorname{Tr}(e^{-t\Delta})=\int_M H_M(x,x,t)\,d\mathrm{vol}_M(x)=\sum_{j=0}^\infty e^{-\lambda_j t},
\]
where the eigenvalues are counted with multiplicity. As $t\to 0^+$, integrating the diagonal asymptotic expansion in \eqref{short time asymptotics heat kernel} yields the expansion (see \cite[Theorem 3.24]{rosenbergbook}):
\begin{equation} \label{heat trace asymptotic expansion}
    Z_M(t)\sim \frac{1}{(4\pi t)^{d/2}}\sum_{j=0}^\infty \tilde{u}_j(M)\,t^j \quad \text{as } t \to 0^+,
\end{equation}

where the global heat trace coefficients $\tilde{u}_j(M)$ are given by
\begin{equation} \label{local global heat trace coefficients relation}
    \tilde{u}_j(M)=\operatorname{vol}(M)\,u_j(\nn).
\end{equation}

To prove our first result Theorem ~\ref{thm: main1}, we need to bound the heat trace $Z_M(t)$ for small times. To do this, we first relate the heat kernel on $M$ to the minimal positive heat kernel on $\nn$. More precisely, the heat kernel $H_M(x,y,t)$ on the compact quotient $M$ is obtained from $H_{\nn}(x,y,t)$ by periodization:
\begin{equation} \label{eq: poincare series heat kernel}
    H_M(x,y,t)=\sum_{\gamma\in\Gamma} H_{\nn}(\tilde x,\gamma\tilde y,t),
\end{equation}
where $\tilde{x},\tilde{y}$ are arbitrary lifts of $x,y \in M$. This identity is stated at the beginning of Section~4 of Weber~\cite{Weber2008-kq}. The series in \eqref{eq: poincare series heat kernel} converges absolutely for each $t>0$ and locally uniformly on compact subsets of $\nn\times \nn\times (0,\infty)$; which follow from the Gaussian upper bounds for \(H_{\nn}(x,y,t)\) together with the at most exponential growth of \(\Gamma\)-orbits.

\begin{prop}\label{prop: heat trace decomposition}
Let $\mathcal{F} \subset {\nn}$ be a compact fundamental domain for $\Gamma$. The heat trace $Z_M(t)$ admits the identity
\begin{equation}
    Z_M(t) = \mathrm{Vol}(M)\, H_{\nn}(o, o, t) + Z_M^{\mathrm{off}}(t),
\end{equation}
where $Z_M^{\mathrm{off}}(t) := \displaystyle \int_{\mathcal{F}} \bigg(\sum_{\gamma \in \Gamma \setminus \{\mathrm{id}\}} H_{\nn}(x, \gamma x, t) \bigg)\, d\mathrm{vol}_{\nn}(x)$ and $o \in \nn$ is any fixed basepoint. Moreover, there exists a constant $ L > 0$ such that, as $t \to 0^+$,
\begin{equation}
    Z_M^{\mathrm{off}}(t)
    = O\!\left(t^{-d/2} e^{-L^2/(12t)}\right),
\end{equation}
where $d = \dim \nn$ and $L>0$ is a constant depending only on $\nn$ and $\Gamma$.

\end{prop}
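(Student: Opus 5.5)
The plan has two parts: first derive the identity by integrating the periodized kernel over a fundamental domain, then bound the off-diagonal orbit sum using the Gaussian estimate of Lemma~\ref{heat kernel estimate} together with a lattice-point count.

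\textbf{The identity.} Start from $Z_M(t)=\int_M H_M(x,x,t)\,d\mathrm{vol}_M(x)$. Lift the integral to the compact fundamental domain $\mathcal{F}$; the boundary of $\mathcal{F}$ has measure zero, so the covering map identifies $\mathcal{F}$ with $M$ up to a null set. Substitute the periodization formula \eqref{eq: poincare series heat kernel} with $\tilde x=\tilde y=x\in\mathcal{F}$. The series converges uniformly on the compact set $\mathcal{F}\times\{t\}$, so we may split off the term $\gamma=\mathrm{id}$ and integrate it separately. By Lemma~\ref{diagonal heat kernel constant}, $H_X(x,x,t)=H_X(o,o,t)$ for every $x$, so this term integrates to $\mathrm{vol}(\mathcal{F})H_X(o,o,t)=\mathrm{Vol}(M)H_X(o,o,t)$. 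The remaining terms give exactly $Z_M^{\mathrm{off}}(t)$.

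\textbf{Estimating $Z_M^{\mathrm{off}}(t)$.} Set $L:=\inf\{d_X(x,\gamma x): x\in X,\ \gamma\neq\mathrm{id}\}$. This is the length of the shortest closed geodesic, i.e.\ twice the injectivity radius of $M$. It is positive because $\Gamma$ is torsion-free and acts cocompactly and properly discontinuously. Equivalently, the displacement function $x\mapsto\min_{\gamma\ne\mathrm{id}}d(x,\gamma x)$ is continuous, $\Gamma$-invariant and positive, hence bounded below on $\mathcal{F}$. For $t\in(0,1)$, use
\[
\frac{d^2}{6t}\ \ge\ \frac{d^2}{12t}+\frac{L^2}{12t}\qquad\text{whenever } d\ge L .
\]
Together with Lemma~\ref{heat kernel estimate} this gives
\[
\sum_{\gamma\neq\mathrm{id}}H_X(x,\gamma x,t)\le Ct^{-d/2}e^{-L^2/(12t)}\sum_{\gamma\ne \mathrm{id}}e^{-d(x,\gamma x)^2/(12t)}.
\]
For $t<1$ the last sum is bounded by $S(x):=\sum_{\gamma}e^{-d(x,\gamma x)^2/12}$, and it remains to show $S$ is bounded uniformly on $\mathcal{F}$.

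\textbf{The main obstacle: uniform bound on the orbit sum.} I would handle this by counting lattice points. Let $D=\operatorname{diam}\mathcal{F}$ and $r=L/2$. The balls $B(\gamma x,r)$ are pairwise disjoint, and each has volume $v_r>0$ independent of the centre by homogeneity. Hence
\[
\#\{\gamma: d(x,\gamma x)\le R\}\le \frac{\mathrm{vol}\,B(x,R+r)}{v_r}.
\]
Volume growth in $X$ is at most exponential, $\mathrm{vol}\,B(x,R)\le Ae^{BR}$; for example, Bishop--Gromov comparison applies since the Ricci curvature is bounded below by a constant, being parallel. Summing over shells $R\in[k,k+1)$ gives
\[
S(x)\le \sum_k A'e^{Bk}e^{-k^2/12}<\infty,
\]
with constants independent of $x$. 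Integrating over $\mathcal{F}$ then yields
\[
Z_M^{\mathrm{off}}(t)\le C\,\mathrm{vol}(\mathcal{F})\,\sup S\cdot t^{-d/2}e^{-L^2/(12t)},
\]
which is the claimed bound. Here $L$ depends only on $X$ and $\Gamma$, as required.
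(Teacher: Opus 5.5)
Your proposal is correct. Its architecture matches the paper's: periodize the kernel, split off the identity term using Lemma~\ref{diagonal heat kernel constant}, split the Gaussian exponent so that a factor $e^{-L^2/(12t)}$ comes out, and control the remaining orbit sum $\sum_\gamma e^{-d_X(x,\gamma x)^2/12}$ by a shell decomposition.

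The genuine difference is how you obtain the uniform exponential orbit count.
\begin{itemize}
\item The paper proves it in Proposition~\ref{lemma:orbit growth} via the \v{S}varc--Milnor lemma. The orbit map is a quasi-isometry, balls in the word metric grow at most exponentially, and a separate conjugation argument moves the bound from a fixed basepoint $x_0$ to an arbitrary $x$.
\item You use a packing argument instead. The orbit $\Gamma x$ is $L$-separated, so the balls $B(\gamma x, L/2)$ are pairwise disjoint and lie inside $B(x,R+L/2)$. Bishop--Gromov, applicable because Ricci curvature is bounded below on the homogeneous space $X$, then bounds the volume of that ball exponentially.
\end{itemize}

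Each route has its advantages.
\begin{itemize}
\item Your route is shorter and reuses the same constant $L$ that produces the $e^{-L^2/(12t)}$ factor. It is automatically uniform in $x$, since ball volumes in $X$ do not depend on the centre, so no basepoint-transfer step is needed. It also gives an explicit growth rate in terms of the curvature lower bound.
\item The paper's coarse-geometric route needs no curvature or volume comparison and no separation constant. It therefore applies verbatim to any proper cocompact isometric action on a proper geodesic space.
\end{itemize}

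A smaller difference concerns the definition of $L$. You take the infimum of the displacement over all of $X$, which is the systole of $M$. You get positivity from the continuity and $\Gamma$-invariance of the displacement function on the compact quotient. The paper takes the infimum over $\mathcal{F}$ and argues by sequential compactness and proper discontinuity. Since $\Gamma\mathcal{F}=X$ and displacement is invariant under conjugation, the two constants coincide. Your formulation makes it evident that $L$ does not depend on the choice of $\mathcal{F}$.
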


\begin{proof}
Using \eqref{eq: poincare series heat kernel}, we obtain 
\[
Z_M(t)=\int_M H_M(x,x,t)\,d\mathrm{vol}_M(x)
=\int_{\mathcal F}\bigg(\sum_{\gamma\in\Gamma} H_{\nn}(x,\gamma x,t)\bigg)\,d\mathrm{vol}_X(x).
\]
Splitting off the identity term gives
\[
Z_M(t)=\int_{\mathcal F} H_{\nn}(x,x,t)\,d\mathrm{vol}_{\nn}(x)
+\int_{\mathcal F}\bigg(\sum_{\gamma \in \Gamma \setminus \{\mathrm{id}\}} H_{\nn}(x,\gamma x,t) \bigg)\,d\mathrm{vol}_{\nn}(x).
\]

It follows from Lemma ~\ref{diagonal heat kernel constant}  $H_{\nn}(x,x,t)=H_{\nn}(o,o,t)$, hence
\[
\int_{\mathcal F} H_{\nn}(x,x,t)\,d\mathrm{vol}_{\nn}(x)
=\mathrm{Vol}(M)\,H_{\nn}(o,o,t).
\]

From Lemma ~\ref{heat kernel estimate} we have the following estimate for all $x,y \in \nn$:
\[
H_{\nn}(x,y,t)\le C t^{-d/2}\exp\!\left(-\frac{d_{\nn}(x,y)^2}{6t}\right),
\]
with $C> 0 $ as in Lemma ~\ref{heat kernel estimate} and $t \in (0,1)$.  To uniformly bound the sum over $\gamma \in \Gamma \setminus \{\mathrm{id}\}$ in $Z_M(t)$, we split the exponential into two parts. Since $t \in (0,1)$, we have $1/t > 1$, which allows us to write:$$\exp\!\left(-\frac{d_X(x,\gamma x)^2}{6t}\right) \le \exp\!\left(-\frac{d_X(x,\gamma x)^2}{12t}\right)\exp\!\left(-\frac{d_X(x,\gamma x)^2}{12}\right).$$ 
To control the first factor on the right-hand side, we note that the displacement $d_X(x, \gamma x)$ is bounded away from zero on the fundamental domain. Since $\Gamma$ acts freely and properly discontinuously and $M=\Gamma\backslash \nn$ is compact, there exists a compact fundamental domain $\mathcal F \subset \nn$.  Set
\[
L := \inf\{ d_{\nn}(x,\gamma x) : x\in \mathcal{F},\; \gamma\in\Gamma\setminus\{\mathrm{id}\} \}.
\]
We claim $L>0$. Otherwise, there exist $x_n\in \mathcal{F}$ and
$\gamma_n\neq \mathrm{id}$ with $d_{\nn}(x_n,\gamma_n x_n)\to 0$. Passing to a subsequence, $x_n\to x\in \mathcal F$. Then
\[
d_{\nn}(\gamma_n x,x)\le 2\,d_{\nn}(x_n,x)+d_{\nn}(x_n,\gamma_n x_n)\to 0,
\]
so $\gamma_n x\to x$. By proper discontinuity,
$\{\gamma\in\Gamma : \gamma U\cap U\neq\emptyset\}$ is finite for some neighborhood $U$
of $x$, so $\gamma_n$ is eventually constant, say $\gamma_n=\gamma\neq \mathrm{id}$,
forcing $\gamma x=x$; a contradiction. Thus $L>0$.

\noindent Because the integration domain for the off-diagonal sum is restricted to $\mathcal{F}$, we immediately have $d_X(x, \gamma x) \ge L$ for all $x \in \mathcal{F}$ and $\gamma \neq \mathrm{id}$. Substituting this uniform lower bound $L$ into our split exponential yields:
\[
H_{\nn}(x,\gamma x,t)
\le C t^{-d/2}\exp\!\left(-\frac{L^2}{12t}\right)
\exp\!\left(-\frac{d_{\nn}(x,\gamma x)^2}{12}\right).
\]

It remains to bound the purely spatial sum:
\[
\sum_{\gamma \in \Gamma \setminus \{\mathrm{id}\}} \exp\!\left(-\frac{d_{\nn}(x,\gamma x)^2}{12}\right).
\]

Since \(\nn\) is a simply connected symmetric space of nonpositive sectional curvature, it is a Hadamard manifold. As \(\Gamma\) is torsion-free and uniform, it acts freely and cocompactly by isometries. The exponential orbit growth estimate (see Proposition~\ref{lemma:orbit growth}) then yields constants \(\mu,\eta>0\), independent of \(x\), such that
\[
\#\{\gamma\in\Gamma : d_{\nn}(x,\gamma x)<R\}\le \mu e^{\eta R}
\quad \text{for all } x\in \nn,\ R>0.
\]

Writing the sum in shells, we obtain for \(x\in \mathcal F\),

\begin{align*}
    \sum_{\gamma \in \Gamma \setminus \{\mathrm{id}\}} e^{-d_{\nn}(x,\gamma x)^2/12} & \leq \sum_{k=0}^\infty \#\{\gamma\in\Gamma : k \le d_{\nn}(x,\gamma x) < k+1\}\, e^{-k^2/12} \\
    & \leq \mu \sum_{k=0}^\infty e^{\eta(k+1)} e^{-k^2/12}.
\end{align*}
The series $\displaystyle \sum_{k=0}^\infty e^{\eta(k+1)} e^{-k^2/12}$ converges, hence there exists \(K>0\) such that
\[
\sum_{\gamma \in \Gamma \setminus \{\mathrm{id}\}} e^{-d_{\nn}(x,\gamma x)^2/12}\le K
\quad \text{for all } x\in \mathcal F.
\]

Therefore,
\[
\sum_{\gamma \in \Gamma \setminus \{\mathrm{id}\}} H_{\nn}(x,\gamma x,t)
\le C K\, t^{-d/2} \exp\!\left(-\frac{L^2}{12t}\right),
\]
uniformly for $x\in\mathcal F$ and $t\in(0,1)$. Integrating over $\mathcal F$ yields
\[
\int_{\mathcal F}\bigg (\sum_{\gamma \in \Gamma \setminus \{\mathrm{id}\}} H_{\nn}(x,\gamma x,t)\bigg )\,d\mathrm{vol}_{\nn}(x)
\le \mathrm{Vol}(M)\, C K\, t^{-d/2} e^{-L^2/(12t)}.
\]

Hence the remainder term $Z_M^{\mathrm{off}}(t)$ is $O\!\left(t^{-d/2} e^{-L^2/(12t)}\right)$ as $t\to 0^+$, and in particular decays exponentially. 
\end{proof}

% \textcolor{red}{in this case their universal covers are hadamard manifold and symmetric spaces, but not necessarily of noncompact type, they may have flat factors, so figure it out, this may be required to apply the cartan ambrose theorem later.}

% The discussion above reduces the nearly isospectral rigidity problem to
% a comparison of the low-order heat invariants. In the next section, we
% combine the heat trace asymptotics with the fixed covering assumption to
% show that the leading contribution is spectrally rigid up to an
% exponentially decaying error term.

\section{Proof of Theorem ~\ref{thm: main1}} \label{proof: thm main 1}

Since $M_1$ and $M_2$ are nearly isospectral, their Laplace spectra differ by only finitely many eigenvalues. Consequently, their eigenvalue counting functions share the same leading asymptotic behavior. By Weyl's law, this implies that $M_1$ and $M_2$ have the same dimension and the same volume; denote
\[
\dim M_1=\dim M_2=d, \qquad \mathrm{Vol}(M_1)=\mathrm{Vol}(M_2)=V.
\]

Let $Z_{M_i}(t)$ denote the heat trace of $M_i$. By Proposition ~\ref{prop: heat trace decomposition}, we have
\[
Z_{M_i}(t)= V\,H_{\nn}(o,o,t) + Z_{M_i}^{\mathrm{off}}(t),
\]
where
\[
Z_{M_i}^{\mathrm{off}}(t)=O\!\left(t^{-d/2}e^{-L_i^2/(12t)}\right)
\qquad (t\to 0^+),
\]
for some constant $L_i>0$ depending on $\nn$ and $\Gamma_i$.

Setting $L:=\min\{L_1,L_2\}$ and subtracting the two heat traces, we obtain
\begin{equation}\label{eq: heat trace difference clean}
E(t):=Z_{M_1}(t)-Z_{M_2}(t)
=O\!\left(t^{-d/2}e^{-L^2/(12t)}\right)
\qquad (t\to 0^+).
\end{equation}

On the other hand, since $E(t)$ is a finite linear combination of exponentials, we can write
\begin{equation} \label{eq: E(t) clean}
    E(t)=\sum_{i=1}^r c_i e^{-\alpha_i t},
\end{equation}
where the $\alpha_i$ are distinct non-negative real numbers and the $c_i$ are non-zero integers. In particular, $E(t)$ extends to a real-analytic function in a neighborhood of $t=0$. However, the estimate \eqref{eq: heat trace difference clean} implies that
\[
E(t)=o(t^m)\qquad\text{for all }m\ge0. 
\]
Since $E$ is real-analytic in a neighborhood of $0$, all Taylor coefficients of $E$ at $0$ must vanish. Hence
$
E^{(m)}(0)=0$
for all $m\ge0$.
Differentiating \eqref{eq: E(t) clean} 
and then evaluating at $t=0$ yields
\[
E^{(m)}(0)=(-1)^m \sum_{i=1}^r c_i \alpha_i^m.
\]
Thus
\[
\sum_{i=1}^r c_i \alpha_i^m=0 \qquad \text{for } m=0,1,\dots,r-1.
\]

This is a homogeneous linear system in the unknowns $c_1,\dots,c_r$ with coefficient matrix given by the Vandermonde matrix
\[
V=\bigl(\alpha_j^{\,i-1}\bigr)_{1\le i,j\le r}.
\]
Since the $\alpha_i$ are pairwise distinct, the determinant of $V$ is nonzero, and hence $V$ is invertible. Therefore $c_1=\cdots=c_r=0$, contradicting the assumption that mismatched eigenvalues exist.

It follows that no such eigenvalues exist, and hence
\[
Z_{M_1}(t)=Z_{M_2}(t)\quad \text{for all } t>0.
\]
Equality of the heat traces for all $t > 0$ implies equality of the Laplace spectra with multiplicity, as the heat trace is the Laplace transform of the spectral measure. Therefore, $M_1$ and $M_2$ are isospectral.
\qed

\section{Distinguishing the classical families via heat invariants} \label{sec: heat invariant calculation} 

We begin by describing the collection $\col$ in detail, which was outlined in the introduction.

\begin{definition} \label{the core collection}
    Let $\col$ be the collection of irreducible symmetric spaces of noncompact type listed below:
 \begin{center}
\begin{tabular}{c l}
$\mathrm{AI}:$ 
& $\mathrm{SL}(n,\mathbb{R})/\mathrm{SO}(n)$, \quad $n \geq 5$ \\[0.7em]

$\mathrm{AII}:$ 
& $\mathrm{SU}^*(2n)/\mathrm{Sp}(n)$, \quad $n \geq 3$ \\[0.7em]

$\mathrm{AIII}:$ 
& $\mathrm{SU}(p,q)/\mathrm{S}(\mathrm{U}_p \times \mathrm{U}_q)$, \quad $p \geq q \geq 2$ \\[0.7em]

$\mathrm{BDI}:$ 
& $\mathrm{SO}_0(p,q)/(\mathrm{SO}(p)\times\mathrm{SO}(q))$, \quad $p \geq q \geq 5$ \\[0.7em]

$\mathrm{DIII}:$ 
& $\mathrm{SO}^*(2n)/\mathrm{U}(n)$, \quad $n \geq 3$ \\[0.7em]

$\mathrm{CI}:$ 
& $\mathrm{Sp}(n,\mathbb{R})/\mathrm{U}(n)$, \quad $n \geq 3$ \\[0.7em]

$\mathrm{CII}:$ 
& $\mathrm{Sp}(p,q)/(\mathrm{Sp}(p)\times\mathrm{Sp}(q))$, \quad $p \geq q \geq 2$.
\end{tabular}
\end{center}
\end{definition}

\begin{remark}
    These are the standard models for irreducible symmetric spaces of noncompact type arising from the seven classical families in Cartan's classification; see Helgason \cite[Ch. X, \S 6.2, Table V]{helgason1}. For explicit matrix
realizations of the associated classical groups and Lie algebras, we follow
Rossmann \cite[\S 3.1]{rossmann}. In particular, we use the signature
convention \((p,q)\) with \(p\) positive and \(q\) negative entries. We write {\normalfont{\(SO_0(p,q)\)}} for the identity component of \(SO(p,q)\), and the groups
\(SU(p,q)\), \(Sp(p,q)\) are defined with respect to Hermitian forms of signature
\((p,q)\). The parameter lower bounds specified in Definition ~\ref{the core collection} serve two purposes: they exclude the standard exceptional low-dimensional isomorphisms {\normalfont(e.g., $\mathfrak{sl}(2,\mathbb{R}) \cong \mathfrak{su}(1,1) \cong \mathfrak{so}(2,1)$)} and ensure that the invariant-separation arguments used later apply uniformly across the families under consideration.
\end{remark} 

For the spaces in the collection \(\col\), we now study the heat invariants \(\{u_j(X)\}_{j\geq 0}\) introduced in Definition~\ref{algebraic invariants of symmetric spaces}, which are associated to any simply connected symmetric space \(X\) of nonpositive sectional curvature. The goal of this section is to show that, apart from certain mixed-family configurations, the pair \((u_1(X),u_2(X))\) uniquely determines a space \(X\) in the collection $\col$ (see Theorem~\ref{curvature invariant determine spaces}).

\newcommand{\gl}{\mathfrak{g}}
\newcommand{\glp}{\mathfrak{p}}
\newcommand{\glk}{\mathfrak{k}}

\subsection{Lie-theoretic formulas for curvature invariants} \label{subsec: curvature invariant lie calculation}

We will begin by systematically calculating some of the standard curvature invariants for a symmetric space. The curvature invariant calculations in this subsection follow the notation,
conventions, and exposition of Matsushima ~\cite{matsushima}.

Let $X = G/K$ be an irreducible symmetric space of noncompact type (in particular, any space from our collection $\col$), where $G$ is a connected, noncompact, real semisimple Lie group with finite center whose Lie algebra $\mathfrak{g}$ is simple, and $K$ is a maximal compact subgroup. Let $\mathfrak{g} = \mathfrak{k} \oplus \mathfrak{p}$ denote the associated Cartan decomposition of $\mathfrak{g}$, and let $\theta$ be the corresponding Cartan involution. Recall the following commutator relations:
\begin{equation} \label{commutation relation}
[\mathfrak{k},\mathfrak{k}] \subset \mathfrak{k}, \quad [\mathfrak{k},\mathfrak{p}] \subset \mathfrak{p} , \quad [\mathfrak{p},\mathfrak{p}] \subset \mathfrak{k}.
\end{equation}

Let $B_{\gl}(\cdot,\cdot)$ be the Killing form of $\mathfrak{g}$. Because $X$ is of noncompact type, $B_{\gl}$ is positive definite on $\mathfrak{p}$. We use this restriction to define an inner product on $\mathfrak{p} \simeq T_o X$, where $o = eK$ is the fixed base point, and denote this by $\langle \cdot , \cdot \rangle$. We then extend this via the $G$-action to a standard $G$-invariant Riemannian metric $\tilde{g}$ on $X$. 

We will pick a basis $E_1,\dots,E_r$ of $\mathfrak{p}$ and a basis $E_{r+1},\dots,E_n$ of $\mathfrak{k}$ such that:
\begin{equation} \label{killing form pairing}
\begin{split}
    B_{\gl}(E_i, E_j) &= \delta_{ij} \quad \text{for } 1 \leq i, j \leq r, \\
    B_{\gl}(E_{\alpha}, E_{\beta}) &= -\delta_{\alpha\beta} \quad \text{for } r+1 \leq \alpha, \beta \leq n.
\end{split}
\end{equation}

 Throughout this section, we adopt the following indexing conventions: Latin letters ($i, j, k, \dots$) range from $1$ to $r$, while Greek letters ($\alpha, \beta, \gamma, \dots$) denote indices from $r + 1$ to $n$. When the full range from $1$ to $n$ is required, we use the symbols $\lambda, \mu, \nu, \dots$. Let
 \begin{equation}
[E_\lambda, E_\mu] = \sum_{\nu=1}^n c_{\lambda\mu}^\nu E_\nu .
\end{equation}
By \eqref{commutation relation}, among the structure constants $c_{\lambda\mu}^\nu$ only the $c_{\alpha\beta}^\gamma$, $c_{ij}^\alpha$, $c_{j\alpha}^i$, $c_{\alpha j}^i$ can be non zero. We shall write $c_{\alpha ij}$ instead of $c_{ij}^\alpha$.

For any $Z_1,Z_2 \in \glp$, define the endomorphism $R(Z_1,Z_2)$ from $\glp$ to $\glp$ given by

\begin{equation}
    R(Z_1,Z_2)Z = -[[Z_1,Z_2],Z],
\end{equation}
for all $ Z \in \glp$. This endomorphism is the standard algebraic realization of the Riemann curvature tensor of $(X, \tilde{g})$ at the basepoint $o$. Following \cite{matsushima} we set,

\begin{equation}
    R(E_k,E_h)E_j = \sum_{i=1}^r R_{ijkh}E_i.
\end{equation}
Then we get the identity (see \cite[(1.7)]{matsushima}) 
\begin{equation} \label{Riemann curvature structure coefficient relations}
    R_{ijkh} = -\sum_{\alpha=r+1}^n c_{\alpha i j} c_{\alpha kh}.
\end{equation}

We write $R_{jk}$ to denote $\operatorname{Ric}(E_j,E_k)$ for $ 1\leq j , k \leq r$. Following \cite[Ch. 2, \S 3.4]{petersen} it is defined as:
\begin{equation} \label{Ricci tensor definition}
    R_{jk} : = \sum_{i=1}^r \langle R(E_i,E_k)E_j, E_i\rangle.
\end{equation}

Then from \cite[(1.6),(1.7)]{matsushima} we have:
\begin{equation} \label{Ricci tensor formula}
    R_{jk}  = -\frac{1}{2}\delta_{jk}.
\end{equation}
Next we will calculate $|\operatorname{Ric}|^2$, which is defined as:

\begin{equation*}
    |\operatorname{Ric}|^2 : = \sum_{j,k =1}^r R_{jk}R^{jk}. 
\end{equation*}
Using \eqref{Ricci tensor formula} along with the fact that $R^{jk} =R_{jk}$ as $E_1,...,E_r$ is an orthonormal basis of $\glp$ we get,

\begin{equation} \label{Ricci norm squared}
    |\operatorname{Ric}|^2 = \sum_{j,k =1}^r (R_{jk})^2  = 1/4\sum_{j,k =1}^r \delta^2_{jk}  = \frac{\operatorname{dim}(\mathfrak{p})}{4}.
\end{equation}

Next we calculate the scalar curvature which is defined as (see \cite[ Ch. 2, \S 3.5]{petersen}):

\begin{equation} \label{scalar curvature definition}
    \operatorname{scal}: =\operatorname{tr}(\operatorname{Ric}) = \sum _{j=1}^r \sum_{i=1}^r \langle R(E_i,E_j) E_j, E_i \rangle.
\end{equation}
A quick look at \eqref{Ricci tensor definition} shows that the inner sum $\displaystyle \sum_{i=1}^r \langle R(E_i,E_j) E_j, E_i \rangle $ is simply $R_{jj}$. Then from \eqref{Ricci tensor formula} we have:

\begin{equation} \label{scalar curvature formula}
    \operatorname{scal} = \sum _{j=1}^r R_{jj} = -\frac{1}{2}\sum_{j=1}^r \delta_{jj}  = -\frac{\operatorname{dim}(\glp)}{2}. 
\end{equation}

Our final calculation will be for the squared norm of the Riemann curvature tensor, defined as follows:
\begin{equation}
    |\operatorname{Riem}|^2  = \sum_{i,j,k,l=1}^r R_{ijkl}R^{ijkl}.
\end{equation} 
As we are working with the orthonormal basis $E_1,\dots,E_r$ of $\mathfrak{p}$, we have $R^{ijkl}=R_{ijkl}$. Hence
\begin{equation} \label{Riemann norm 1}
    |\operatorname{Riem}|^2 = \sum_{i,j,k,l=1}^r R^2 _{ijkl}.
\end{equation}

To calculate further, we use the identity \eqref{Riemann curvature structure coefficient relations} relating $R_{ijkl}$ to the structure constants of $\mathfrak{g}$. Plugging \eqref{Riemann curvature structure coefficient relations} into \eqref{Riemann norm 1}, we get
\begin{align*}
    |\operatorname{Riem}|^2 & = \sum_{i,j,k,l=1}^r R^2 _{ijkl}  
    = \sum_{\alpha,\beta} \bigg(\sum_{i,j} c_{\alpha i j} c_{\beta ij}  \bigg) \bigg(\sum_{k,l} c_{\alpha kl} c_{\beta kl}  \bigg).
\end{align*}

We proceed as in Matsushima \cite[\S 4]{matsushima} and write the orthogonal decomposition of $\mathfrak{k}$ into its center and simple ideals:
\begin{equation}
    \mathfrak{k} = \mathfrak{z} \oplus \mathfrak{k}_1 \oplus \dots \oplus \mathfrak{k}_q,
\end{equation}
where $\mathfrak{z}$ is the center of $\mathfrak{k}$ and $\mathfrak{k}_1,\dots,\mathfrak{k}_q$ are simple ideals in $\mathfrak{k}$. We now refine our choice of the basis $\{E_{r+1}, \dots, E_n\}$ for $\mathfrak{k}$. While preserving the relation $B_{\gl}(E_{\alpha}, E_{\beta}) = -\delta_{\alpha\beta}$, we additionally require that each $E_{\alpha}$ belongs to the center $\mathfrak{z}$ or to one of the simple ideals $\mathfrak{k}_s$. Under this choice, the cross-terms $\sum_{i,j} c_{\alpha i j} c_{\beta ij}$ vanish for $\alpha \neq \beta$, and the sum collapses to
\begin{equation} \label{eq: riem sq sub final}
    |\operatorname{Riem}|^2 = \sum_{\alpha=r+1}^n  \bigg(\sum_{i,j} c^2_{\alpha i j} \bigg)^2.
\end{equation}

So it is enough to evaluate the sum $\displaystyle \sum_{i,j} c^2_{\alpha i j}$ for $\alpha = r+1$ to $n$. For $Z_1,Z_2 \in \mathfrak{k}$, let
\begin{equation}
    \Psi(Z_1,Z_2)  = \operatorname{tr} (\operatorname{ad}_{\mathfrak{p}} Z_1 \circ \operatorname{ad}_{\mathfrak{p}} Z_2),
\end{equation}
where for $Z \in \glk$, $\operatorname{ad}_{\mathfrak{p}} Z$ is the linear map from $\mathfrak{p}$ to $\mathfrak{p}$ that takes $W \mapsto [Z,W]$ for $W \in \mathfrak{p}$. Then from \cite[(4.2)]{matsushima} we have:
\begin{equation} \label{eq: c sq sum identity}
     \sum_{i,j=1}^r c^2_{\alpha i j} = - \Psi(E_{\alpha},E_{\alpha}) \quad \text{for } \alpha = r+1 \dots n.
\end{equation}

To evaluate $\Psi(E_{\alpha},E_{\alpha})$, we use the ideal decomposition of $\mathfrak{k}$ established above:
\begin{enumerate}
    \item If $E_{\alpha} \in \mathfrak{z}$, then $\Psi(E_{\alpha},E_{\alpha})  = -1$.
    \item If $E_{\alpha} \in \mathfrak{k}_s$ for some $1 \leq s \leq q$, then $\Psi(E_{\alpha},E_{\alpha}) = -a_s$, where $a_s$ is a positive real number such that 
    \begin{equation*}
         \Psi(Z_1,Z_2) = a_s B_{\gl}(Z_1,Z_2) \quad \text{for } Z_1,Z_2 \in \mathfrak{k}_s.
    \end{equation*}
\end{enumerate}

Now we use these values of $\Psi(E_{\alpha},E_{\alpha})$ along with \eqref{eq: riem sq sub final},\eqref{eq: c sq sum identity} to obtain
\begin{align*}
    |\operatorname{Riem}|^2   
    = \sum_{\alpha=r+1}^n  (-\Psi(E_{\alpha},E_{\alpha}))^2  
     = \sum_{s=1}^q \operatorname{dim}(\mathfrak{k}_s)a_s^2 +\operatorname{dim}(\mathfrak{z}).
\end{align*}

For computational purposes, it will be useful to express the constant $a_s$ in terms of another constant $b_s$, which we define below. Let $B_{\mathfrak{k}_s}$ be the Killing form of $\mathfrak{k}_s$. From the decomposition $\mathrm{ad}_{\mathfrak{g}}(Z)=\mathrm{ad}_{\mathfrak{k}}(Z)\oplus \mathrm{ad}_{\mathfrak{p}}(Z)$ for $Z \in \mathfrak{k}$, one obtains the identity
\begin{equation} \label{killing form relations}
    B_{\mathfrak{g}}(Z_1,Z_2)=B_{\mathfrak{k}}(Z_1,Z_2)+\Psi(Z_1,Z_2)
\end{equation}
for $Z_1,Z_2 \in \mathfrak{k}$. The restriction of the Killing form $B_{\mathfrak{k}}$ to $\mathfrak{k}_s$ naturally coincides with its intrinsic Killing form $B_{\mathfrak{k}_s}$, i.e. $B_{\mathfrak{k}}\big|_{\mathfrak{k}_s}=B_{\mathfrak{k}_s}$.

% We now explain why the restriction of $B_{\mathfrak{k}}$ to $\mathfrak{k}_s$ coincides with the Killing form of $\mathfrak{k}_s$. Indeed, if $Z\in\mathfrak{k}_s$, then $\mathfrak{k}_s$ is an ideal of $\mathfrak{k}$, so
% \[
% [\mathfrak{k}_s,\mathfrak{z}]=0, \qquad [\mathfrak{k}_s,\mathfrak{k}_i]=0 \text{ for } i\neq s, \qquad [\mathfrak{k}_s,\mathfrak{k}_s]\subseteq \mathfrak{k}_s.
% \]
% Thus $\operatorname{ad}_{\mathfrak{k}}(Z)$ acts trivially on $\mathfrak{z}$ and on each $\mathfrak{k}_i$ with $i\neq s$, and its only nonzero block is the action on $\mathfrak{k}_s$. In particular, for $Z_1,Z_2\in\mathfrak{k}_s$,
% \[
% \operatorname{ad}_{\mathfrak{k}}(Z_1)\circ \operatorname{ad}_{\mathfrak{k}}(Z_2)
% \]
% has the same trace on $\mathfrak{k}$ as
% \[
% \operatorname{ad}_{\mathfrak{k}_s}(Z_1)\circ \operatorname{ad}_{\mathfrak{k}_s}(Z_2)
% \]
% has on $\mathfrak{k}_s$. Therefore,
% \[
% B_{\mathfrak{k}}\big|_{\mathfrak{k}_s}=B_{\mathfrak{k}_s}.
% \]

Now, we restrict the identity $B_{\mathfrak{g}}=B_{\mathfrak{k}}+\Psi$ to a simple ideal $\mathfrak{k}_s$, and use the fact that any $\mathrm{ad}(\glk_s)$-invariant symmetric bilinear form on $\mathfrak{k}_s$ is a scalar multiple of $B_{\mathfrak{k}_s}$, there exists a constant $b_s$ such that
\[
B_{\mathfrak{k}_s}=b_s\, B_{\mathfrak{g}}|_{\mathfrak{k}_s}.
\]
Substituting into the identity \eqref{killing form relations} yields 
\[
a_s+b_s=1 \implies a_s=1-b_s.
\]
We plug this into our earlier expression for $|\operatorname{Riem}|^2$ and obtain
\begin{equation} \label{Riemann curvature norm final}
|\operatorname{Riem}|^2 = \sum_{s=1}^q \operatorname{dim}(\mathfrak{k}_s)(1-b_s)^2 +\operatorname{dim}(\mathfrak{z}).
\end{equation}

We summarize the curvature invariants calculated in this subsection in the following proposition.

\begin{prop} \label{curvature invariants summary}Let $X = G/K$ be an irreducible symmetric space of noncompact type equipped with the standard $G$-invariant metric $\tilde{g}$ induced by the restriction of the Killing form $B_{\gl}$ to $\mathfrak{p}$. Let $\mathfrak{k} = \mathfrak{z} \oplus \mathfrak{k}_1 \oplus \dots \oplus \mathfrak{k}_q$ be the decomposition of $\mathfrak{k}$ into its center and simple ideals. Then the scalar curvature, the squared norm of the Ricci tensor, and the squared norm of the Riemann curvature tensor of $X$ evaluated at the basepoint $o=eK \in X$ are given by:

\begin{align*}\operatorname{scal} &= -\frac{\operatorname{dim}(\mathfrak{p})}{2}, \ |\operatorname{Ric}|^2 &= \frac{\operatorname{dim}(\mathfrak{p})}{4}, \ |\operatorname{Riem}|^2 &= \sum_{s=1}^q \operatorname{dim}(\mathfrak{k}_s) a_s^2 + \operatorname{dim}(\mathfrak{z}),
\end{align*}
where $a_s : = 1-b_s$ and $b_s$ is the proportionality constant uniquely determined by $B_{\mathfrak{k}_s}=b_s\, B_{\mathfrak{g}}|_{\mathfrak{k}_s}$.\end{prop}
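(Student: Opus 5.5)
The proposition collects the three computations carried out in Subsection~\ref{subsec: curvature invariant lie calculation}, so the plan is to reassemble them in order and justify each step that was only sketched.

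\textbf{Step 1: set up the curvature tensor.} Fix the $B_{\gl}$-orthonormal basis $E_1,\dots,E_r$ of $\glp$ and the basis $E_{r+1},\dots,E_n$ of $\glk$ normalized as in \eqref{killing form pairing}. Take $R(Z_1,Z_2)Z=-[[Z_1,Z_2],Z]$ as the curvature at $o$; this is the standard formula for the Levi-Civita curvature of a symmetric space under our sign convention. Expanding with structure constants then gives \eqref{Riemann curvature structure coefficient relations}.

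\textbf{Step 2: Ricci and scalar curvature.} Contract \eqref{Riemann curvature structure coefficient relations} to obtain
\[
R_{jk}=-\sum_{\alpha}\sum_i c_{\alpha ij}c_{\alpha ik}.
\]
Identify this sum with $\tfrac12 B_{\gl}(E_j,E_k)$. To do so, compute $B_{\gl}(E_j,E_k)=\operatorname{tr}(\operatorname{ad}E_j\operatorname{ad}E_k)$ by splitting the trace over $\glk$ and over $\glp$. By $\operatorname{ad}$-invariance of $B_{\gl}$ and the normalization (which forces $c_{\alpha ij}=-c^{i}_{j\alpha}$ up to sign), the two contributions are equal. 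Hence $R_{jk}=-\tfrac12\delta_{jk}$, which is Matsushima's (1.6)--(1.7). It follows that $|\operatorname{Ric}|^2=\dim\glp/4$ and $\operatorname{scal}=-\dim\glp/2$.

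\textbf{Step 3: the Riemann norm.} Squaring \eqref{Riemann curvature structure coefficient relations} and summing yields
\[
\sum_{\alpha,\beta}\Bigl(\sum_{i,j}c_{\alpha ij}c_{\beta ij}\Bigr)^2 .
\]
The inner sum equals $-\Psi(E_\alpha,E_\beta)$ by \eqref{eq: c sq sum identity}, polarized. The key point is that $\Psi$ is an $\operatorname{ad}(\glk)$-invariant symmetric form on $\glk$.

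First, $\Psi$ and $B_{\gl}$ preserve the decomposition $\glk=\mathfrak z\oplus\glk_1\oplus\dots\oplus\glk_q$. Hence, after choosing each $E_\alpha$ inside one summand, the matrix $\Psi(E_\alpha,E_\beta)$ is block diagonal.

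On each simple $\glk_s$, invariance forces $\Psi=a_sB_{\gl}$ with $a_s>0$. On the center, $B_{\glk}$ vanishes, so \eqref{killing form relations} gives $\Psi=B_{\gl}$ on $\mathfrak z$.

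It remains to diagonalize within blocks. On $\glk_s$, $\Psi$ is a multiple of $B_{\gl}$, so any $-B_{\gl}$-orthonormal basis diagonalizes it. On $\mathfrak z$, $\Psi$ coincides with $B_{\gl}$ and so is likewise diagonal. Therefore
\[
|\operatorname{Riem}|^2=\sum_s\dim(\glk_s)a_s^2+\dim\mathfrak z.
\]
Finally, \eqref{killing form relations} restricted to $\glk_s$ gives $b_s+a_s=1$, i.e. $a_s=1-b_s$.

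\textbf{Expected obstacle.} The main difficulty is the careful sign and normalization bookkeeping. This concerns both the identity $\sum_{i,j}c_{\alpha ij}^2=-\Psi(E_\alpha,E_\alpha)$ and the factor $\tfrac12$ in Ricci. Both depend on relating the $\glk$-components of $[E_i,E_j]$ to the $\glp$-components of $[E_\alpha,E_i]$ via $\operatorname{ad}$-invariance of $B_{\gl}$, with the sign flip $B_{\gl}|_{\glk}=-\delta$. I would verify this once explicitly. The rest is linear algebra.
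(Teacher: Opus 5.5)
Your proposal is correct and follows the same route as the paper: Matsushima's structure-constant formula for $R_{ijkh}$, the identification of $\sum_{i,j}c_{\alpha ij}c_{\beta ij}$ with $-\Psi(E_\alpha,E_\beta)$, a $-B_{\mathfrak g}$-orthonormal basis adapted to $\mathfrak{k}=\mathfrak{z}\oplus\mathfrak{k}_1\oplus\dots\oplus\mathfrak{k}_q$, and $a_s+b_s=1$ obtained from $B_{\mathfrak g}=B_{\mathfrak k}+\Psi$. The paper cites Matsushima for $R_{jk}=-\tfrac12\delta_{jk}$ and simply asserts $\Psi(E_\alpha,E_\alpha)=-1$ on the center; you instead justify these (by splitting the Killing-form trace over $\mathfrak k$ and $\mathfrak p$, and by noting that $B_{\mathfrak k}$ vanishes on $\mathfrak z$), and both justifications are correct.
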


To facilitate the comparison of these invariants across the different families in our collection $\col$, we group the necessary parameters together.  
\begin{definition} \label{def: Cartan decomp structural data}
    For an irreducible symmetric space $G/K$ of noncompact type, we call the quantities,
 \[
\dim \mathfrak p,\quad \dim \mathfrak z,\quad \dim \mathfrak k_s,\quad a_s=1-b_s \ \text{for } 1 \le s \le q,
\] 
where $q$ denotes the number of simple factors in $\mathfrak{k}$ and $b_s$ is defined by $B_{\mathfrak k_s} = b_s\, B_{\mathfrak g}\big|_{\mathfrak k_s}$, as the ``Cartan decomposition and structural data'' for the symmetric space $G/K$.
\end{definition}

\subsection{Uniqueness via the first two nontrivial heat coefficients} 

We now apply the Lie-theoretic calculations of Section~\ref{subsec: curvature invariant lie calculation} to the spaces in our collection $\col$. Recall that in Definition ~\ref{algebraic invariants of symmetric spaces}, we associated a sequence $\{u_j(X)\}_{j \geq 0}$ of heat invariants to any simply connected symmetric space $X$ of nonpositive sectional curvature. Throughout, we use the curvature convention
\[
R(X,Y)Z=\nabla_X\nabla_Y Z-\nabla_Y\nabla_X Z-\nabla_{[X,Y]}Z,
\]
which is the convention fixed in Section~\ref{heat kernel preliminaries}. For $X \in \col$ equipped with the standard metric $\tilde g$ defined in Section ~\ref{subsec: curvature invariant lie calculation}, let $\Delta_X = -\mathrm{div}_{X} \circ \mathrm{grad}_{X}$ denote the associated Laplace--Beltrami operator. The first three invariants are universal scalar invariants (see, for example,
\cite[Chapter~4]{gilkey} and \cite[\S 3.2,3.3]{rosenbergbook}), and hence can be written as
\begin{equation}\label{heat trace coefficients}
\begin{aligned}
u_0(X) &= 1, \\[6pt]
u_1(X) &= a\,\operatorname{scal}, \\
u_2(X) &= b\,|\operatorname{Riem}|^2 + c\,|\operatorname{Ric}|^2 + d\,\operatorname{scal}^2,
\end{aligned}
\end{equation}
where $a,b,c,d$ are universal constants determined by the chosen conventions for $\Delta_X$ and $R$, and are independent of the particular symmetric space $X \in \col$.

% For $X \in \col$ equipped with the standard metric $\tilde{g}$ defined in Section ~\ref{subsec: curvature invariant lie calculation}, let $\Delta_X = -\mathrm{div}_{X} \circ \mathrm{grad}_{X}$ denote the associated Laplace--Beltrami operator. The first three invariants $u_0(X), u_1(X), u_2(X)$ can be expressed entirely in terms of the curvature invariants computed in Proposition ~\ref{curvature invariants summary}. Following \cite[Lemma 3.26, Proposition 3.29 and p. 107]{rosenbergbook}, we have:

% \begin{equation}\label{heat trace coefficients}
% \begin{aligned}
% u_0(X) &= 1, \\[6pt]
% u_1(X) &= \frac{\operatorname{scal}}{6}, \\
% u_2(X) &= \frac{1}{360}\bigl(2|\operatorname{Riem}|^2
% + 2|\operatorname{Ric}|^2 + 5\,\operatorname{scal}^2\bigr).
% \end{aligned}
% \end{equation}

% The first three invariants $u_0(X), u_1(X), u_2(X)$ can be expressed in terms of the curvature invariants computed in Proposition ~\ref{curvature invariants summary}. More precisely, $u_1$ is proportional to the scalar curvature, and $u_2$ is a linear combination of $|\operatorname{Riem}|^2$, $|\operatorname{Ric}|^2$, and $\operatorname{scal}^2$, with universal coefficients (see \cite{rosenbergbook, gilkey}). 

% For symmetric spaces of noncompact type, both $\operatorname{scal}$ and $|\operatorname{Ric}|^2$ are determined by $\dim X$, and hence $u_1$ and $u_2$ are completely determined by $\dim X$ and $|\operatorname{Riem}|^2$.

We now state the main result of this section, which shows that the spaces from the collection $\col$ introduced in Definition ~\ref{the core collection} can be distinguished by the pair $(u_1, u_2)$ except certain mixed family collisions.

\begin{theorem}\label{curvature invariant determine spaces}
Let $\Phi:\col\to \mathbb{R}^2$ be defined by
\[
\Phi(X)=(u_1(X),u_2(X)).
\]
Then $\Phi(X)=\Phi(Y)$ implies $X=Y$, except possibly when, up to interchanging $X$ and $Y$, we have
\[
X \in \mathrm{AIII},\ Y \in \mathrm{BDI};\quad
X \in \mathrm{BDI},\ Y \in \mathrm{CII};\quad
X \in \mathrm{AIII},\ Y \in \mathrm{CII}.
\]
\end{theorem}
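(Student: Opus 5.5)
The plan is to reduce $\Phi(X)=\Phi(Y)$ to a pair of equalities of explicit Lie-theoretic quantities and then compare them family by family. By \eqref{heat trace coefficients} and Proposition~\ref{curvature invariants summary}, $u_1(X)=-\tfrac{a}{2}\dim\mathfrak p$. Since $a\neq 0$ (it is the classical $1/6$ up to convention), $u_1(X)=u_1(Y)$ forces $\dim X=\dim Y$. Given equal dimension, $|\operatorname{Ric}|^2$ and $\operatorname{scal}^2$ agree. Since $b\neq 0$, $u_2(X)=u_2(Y)$ then forces $|\operatorname{Riem}|^2(X)=|\operatorname{Riem}|^2(Y)$. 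So it suffices to show that the pair
\[
\Bigl(\dim\mathfrak p,\ \textstyle\sum_s \dim(\mathfrak k_s)a_s^2+\dim\mathfrak z\Bigr)
\]
separates the spaces in $\col$, apart from the three excluded mixed pairs.

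The second step is to tabulate the structural data of Definition~\ref{def: Cartan decomp structural data} for each family, using the explicit realizations (Rossmann) and the standard Killing form ratios. For example, $B_{\mathfrak{sl}(n,\mathbb R)}=2n\,\mathrm{tr}$ and $B_{\mathfrak{so}(m)}=(m-2)\mathrm{tr}$, so for AI we get $b=(n-2)/(2n)$ and $a=(n+2)/(2n)$. In general each $b_s$ is the ratio of the dual Coxeter-type constant of $\mathfrak k_s$ to that of $\mathfrak g$, with the trace-form normalisations taken into account. Each family then yields $\dim\mathfrak p$ as a polynomial ($n(n+1)/2-1$, $2pq$, $pq$, $4pq$, $n(n-1)$, $n(n+1)$, and so on) and $|\operatorname{Riem}|^2$ as an explicit rational function of $n$ or $(p,q)$.

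The third step is the case analysis. Within a single family I would check injectivity directly. For the one-parameter families, $\dim\mathfrak p$ is strictly increasing in $n$. For AIII, BDI and CII, fixing $pq=D$ and comparing $|\operatorname{Riem}|^2$ reduces, after clearing denominators, to a function of $p+q$ (or of $p^2+q^2$) that is strictly monotone. Together with $pq$ this determines $\{p,q\}$, and the ordering $p\ge q$ pins down the pair.

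For distinct families, I would set the two dimension polynomials equal and substitute into the $|\operatorname{Riem}|^2$ equation, which gives a Diophantine system in two or three integer variables. In each non-excluded pairing I would aim to derive a contradiction by one of three routes: a parity or divisibility obstruction coming from the denominators $2n$, $2(p+q)$, $2(p+q+1)$, or $(p+q-2)$; a size comparison showing one side strictly exceeds the other under the lower bounds in Definition~\ref{the core collection}; or a reduction to a quadratic in one variable whose discriminant is not a square. For instance, when both sides are one-parameter families, substituting the dimension relation leaves a single polynomial equation, and its finitely many integer roots can be checked against the constraints.

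The main obstacle is this mixed-family Diophantine analysis. There are $\binom72=21$ pairs, several of them involving two-parameter families with genuinely rational expressions, and for AIII/BDI/CII the equations are not ruled out by elementary means; this is exactly why those pairs are excluded. I expect the delicate cases to be pairs such as AI against BDI or CI against AIII, where the dimensions can coincide for infinitely many parameter values. For these I would first write $|\operatorname{Riem}|^2$ as $\dim\mathfrak p$ times a simple rational correction and compare the corrections. These corrections are of the shape $\tfrac{1}{2}+O(1/N)$ with family-specific leading $1/N$ coefficients, so asymptotic separation handles large parameters and a finite check handles the small ones, carried out in the appendix.
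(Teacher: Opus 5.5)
Your reduction of $\Phi(X)=\Phi(Y)$ to equality of $(\dim\mathfrak p,\,|\operatorname{Riem}|^2)$ matches the paper, and you are right to note that this needs $a,b\neq 0$, which the paper leaves implicit. Your tabulation of the structural data and your within-family argument (fix $pq$, then use strict monotonicity in $p+q$) also match the paper (Propositions~\ref{prop: collision check 1} and~\ref{prop: collision check 5}). Your three tools are the ones used in Propositions~\ref{prop: collision check 2}--\ref{prop: collision check 4}.

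The gap is the cross-family step. It is the entire content of the theorem, yet you leave all eighteen pairs as a plan, and the one concrete mechanism you offer for the ``main obstacle'' does not work as stated. The ratio $|\operatorname{Riem}|^2/\dim\mathfrak p$ is $\tfrac14+O(1/n)$ for the one-parameter families (e.g.\ $\tfrac{n+2}{4n}$ for AI), not $\tfrac12+O(1/N)$. For the two-parameter families it has no expansion in a single size parameter at all: for AIII it equals $\tfrac{pq+1}{(p+q)^2}$, which tends to $0$ as $p/q\to\infty$.

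What actually disposes of the two pairs you flag is an inequality coming from $(p-q)^2\ge 0$. With $s=p+q$, the BDI ratio is $4\sigma_2/\sigma_1=1+\tfrac{2}{s-2}-\tfrac{(p-q)^2}{(s-2)^2}$. Matching AI's value $1+\tfrac2n$ therefore forces $s\le n+2$, while $s^2\ge 4pq=2(n-1)(n+2)$. Hence $n+2\ge 2(n-1)$, i.e.\ $n\le 4$, which is excluded. Likewise, for AIII one has $4\sigma_2/\sigma_1\le 1+\tfrac{1}{pq}=1+\tfrac{2}{n(n+1)}<1+\tfrac{n+3}{(n+1)^2}$, and the right-hand side is the CI value. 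So these are the easy pairs, not the delicate ones.

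The genuinely delicate pairs are those where the two equations have real solutions in the admissible range for all sufficiently large $n$. No size or asymptotic comparison can exclude those. This happens for AIII--AII, AIII--DIII, BDI--AII, BDI--DIII and CII--AII.

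Take AIII--AII. Equating the invariants gives $s^2=4n^2+2n+4+\tfrac{4}{n-1}$, while $4pq=4n^2-2n-2$, so $(p-q)^2=4n+6+\tfrac{4}{n-1}>0$ for all $n$. The only obstruction is arithmetic: integrality of $s^2$ forces $(n-1)\mid 4$, and the surviving values $n\in\{3,5\}$ give $s^2\in\{48,115\}$, which are not squares.

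The paper treats every such case with an arithmetic device of this kind:
\begin{enumerate}
\item a remainder lying strictly in $(0,1)$;
\item a divisibility condition such as $(n-1)\mid 2s$, followed by a quadratic in $n$ with a bounded auxiliary parameter;
\item a quartic discriminant trapped between consecutive squares.
\end{enumerate}
To turn your outline into a proof you must carry out these arguments pair by pair; ``asymptotic separation plus a finite check'' cannot substitute for them.
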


\begin{proof}
For a space $X \in \col$, define the invariants
\[
\sigma_1(X) := \dim(X), \qquad \sigma_2(X) := | \mathrm{Riem} |^2,
\]
where $|\mathrm{Riem}|^2$ is the squared norm of the Riemann curvature tensor
with respect to the canonical $G$-invariant metric $\tilde{g}$. Note that $\sigma_i(X)$ for $i=1,2$ can be calculated directly from the Cartan decomposition and structural data of $X$ (see Definition ~\ref{def: Cartan decomp structural data}), with $\sigma_1(X) = \operatorname{dim}(\mathfrak{p})$ and $\sigma_2(X)$ given exactly by Proposition ~\ref{curvature invariants summary}.

Observe that for a space $X \in \col$, it follows from Proposition ~\ref{curvature invariants summary}, \eqref{heat trace coefficients} that the pairs $(u_1(X), u_2(X))$ and $(\sigma_1(X), \sigma_2(X))$ uniquely determine each other, as the terms $\operatorname{scal}$ and $|\operatorname{Ric}|^2$ are simply scalar multiples of $\sigma_1(X)$. Thus, it suffices to prove injectivity for the map $X \longmapsto (\sigma_1(X),\sigma_2(X))$.

We calculate closed form expressions for $\sigma_1,\sigma_2$ in appendix section \ref{appendix} and analyze possible collisions of the pair $(\sigma_1,\sigma_2)$ across the
collection $\col$ in appendix section \ref{sec: collision check}.

First, Proposition ~\ref{prop: collision check 1} shows that no two distinct
spaces among the one-parameter families (AI, AII, DIII, CI) have the same pair
$(\sigma_1,\sigma_2)$. In particular, the map is injective on each of these
families, and no collision occurs between any pair of them.

Next, Propositions ~\ref{prop: collision check 2}, ~\ref{prop: collision check 3},
and ~\ref{prop: collision check 4} rule out all collisions between the
two-parameter families (AIII, BDI, CII) and the one-parameter families. Hence,
if $X$ or $Y$ belongs to a one-parameter family, then
\[
(\sigma_1(X),\sigma_2(X)) = (\sigma_1(Y),\sigma_2(Y)) \implies X = Y.
\]

Finally, Proposition ~\ref{prop: collision check 5} shows that no two distinct
spaces within a fixed two-parameter family have the same pair
$(\sigma_1,\sigma_2)$. It follows that the only remaining possible collisions
are between spaces lying in different two-parameter families, namely among
$\mathrm{AIII}$, $\mathrm{BDI}$, and $\mathrm{CII}$. These mixed-family cases are therefore the only
potential source of non-injectivity.

Combining these results, we conclude that the map
$X \mapsto (\sigma_1(X),\sigma_2(X))$, and hence $\Phi$, is injective on
$\col$ except possibly for cross-family collisions between the three
two-parameter families $\mathrm{AIII}$, $\mathrm{BDI}$, and $\mathrm{CII}$.
Equivalently, if $\Phi(X)=\Phi(Y)$ for $X,Y\in \col$, then $X=Y$ unless
$\{X,Y\}$ is one of the mixed-family configurations
$(\mathrm{AIII},\mathrm{BDI})$, $(\mathrm{BDI},\mathrm{CII})$, or
$(\mathrm{AIII},\mathrm{CII})$, up to order.

\end{proof}

\section{Proof of Theorem ~\ref{thm: main2}}
Since $M_1$ and $M_2$ are nearly isospectral, arguing as in the proof of Theorem~\ref{thm: main1}, we obtain by Weyl's law that both spaces have the same dimension and the same volume. Let $d = \operatorname{dim}(M_i)$ be this common dimension.  For $k=1,2$, denote by
\[
Z_{M_k}(t) = \sum_{j} e^{-\lambda_j^{(k)} t}
\]
the heat trace of $M_k$. The difference of their heat traces can be written as
\[
E(t) := Z_{M_1}(t) - Z_{M_2}(t) = \sum_{i=1}^r c_i e^{-\alpha_i t},
\]
which is a finite linear combination of exponentials. In particular, $E(t)$ is real-analytic at $t=0$ and admits a Taylor expansion involving only nonnegative integer powers of $t$.

On the other hand, each heat trace admits the asymptotic expansion (see \eqref{heat trace asymptotic expansion})
\[
Z_{M_k}(t) \sim  \frac{1}{(4\pi t)^{d/2}} \sum_{j=0}^\infty \tilde{u}_{j}(M_k)\, t^j \quad \text{as } t \to 0^+.
\]
Taking the difference gives
\[
E(t) \sim  \frac{1}{(4\pi t)^{d/2}} \sum_{j=0}^\infty (\tilde{u}_{j}(M_1) - \tilde{u}_{j}(M_2))\, t^j.
\]

Observe that the universal covers of $M_1$ and $M_2$ belong to $\col$. A quick inspection of Definition~\ref{the core collection} reveals that every space in this collection has dimension at least six; hence, the common dimension is $d \ge 6$. Since $E(t)$ is a finite linear combination of exponentials, it extends real-analytically across $t=0$. Therefore its asymptotic expansion as $t\to0^+$ cannot contain any negative powers of $t$. On the other hand, the expansion of $E(t)$ is of the form
\[
 \frac{1}{(4\pi t)^{d/2}}\sum_{j=0}^\infty \bigl(\tilde{u}_j(M_1)-\tilde{u}_j(M_2)\bigr)t^j,
\]
so the terms with $j=0,1,2$ contribute negative powers of $t$. Consequently, the corresponding coefficients must vanish:
\[
\tilde{u}_{j}(M_1) = \tilde{u}_{j}(M_2) \quad \text{for } j =0,1,2.
\]

% Observe that the universal covers of $M_1$ and $M_2$ belong to $\col$. A quick inspection of Definition ~\ref{the core collection} reveals that every space in this collection has dimension at least six; hence, the common dimension is $d \ge 6$. Since \(E(t)\) extends smoothly (indeed, real-analytically) across \(t=0\), its asymptotic expansion as \(t\to0^+\) cannot contain terms with negative powers of \(t\). The powers of $t$ appearing in the expansion of $E(t)$ are of the form $t^{j-d/2}$. Since $d \ge 6$, the first three terms corresponding to $j=0, 1,$ and $2$ yield exponents $j - d/2 \le 2 - 3 = -1$. Consequently, to avoid a singularity at $t=0$, the coefficients of these negative powers must vanish:
% \[
% \tilde{u}_{j}(M_1) = \tilde{u}_{j}(M_2) \quad \text{for  } j =0,1,2.
% \]
As $M_1,M_2$ have the same volume, it follows from \eqref{local global heat trace coefficients relation} that
\begin{equation}
    u_{j}(X_1) = u_{j}(X_2) \quad \text{for  } j =0,1,2.
\end{equation}
It then follows from Theorem ~\ref{curvature invariant determine spaces} that $X_1 = X_2$. Since spaces in $\col$ are simply connected symmetric spaces of nonpositive sectional curvature, we may apply Theorem ~\ref{thm: main1} (setting $X: = X_1 = X_2$) to conclude that $M_1$ and $M_2$ are isospectral.
 \qed

\section{concluding remarks}

The proof of Theorem~\ref{thm: main2} suggests several natural directions for
further investigation. One possibility is to restrict attention to odd-dimensional
locally symmetric manifolds. In this setting, the argument used in the proof of
Theorem~\ref{thm: main2} shows that two nearly isospectral compact locally
symmetric manifolds of nonpositive sectional curvature must have identical global heat trace coefficients of all orders, and
hence the same is true for the heat invariants of their universal covers.

This leads to a natural question concerning the heat invariants introduced in
Definition~\ref{algebraic invariants of symmetric spaces}. To what extent do the
full collection of heat invariants determine a simply connected symmetric space
of nonpositive sectional curvature up to homothety? Equivalently, one may ask how far the
conclusion of Theorem~\ref{curvature invariant determine spaces} can be extended
beyond the class considered in this paper. Since the curvature operator of a
symmetric space encodes the associated restricted root data, this question is
closely related to the extent to which the full system of heat invariants captures
the underlying Lie-theoretic structure of the symmetric space. It is therefore
conceivable that such an invariant-based classification may hold for a substantially
larger class of symmetric spaces of nonpositive sectional curvature.

A different phenomenon arises in even dimensions. In that setting, the heat
kernel method used here detects only finitely many low-order heat invariants
from the spectral tail, and thus the problem of extending
Theorem~\ref{thm: main2} to larger classes of symmetric spaces appears to
require genuinely new input beyond the heat asymptotic framework used in
this paper. Moreover, since the universal cover is not fixed a priori and the
hypothesis involves only eventual agreement of the scalar Laplace spectrum, it
is not clear that standard representation-theoretic or trace formula methods can
be applied directly in this setting.

\section{Acknowledgement}
The authors thank Lizhen Ji and C. S. Rajan for their valuable comments on an earlier version of this paper.
\appendix

\section{Exponential Orbit Growth for Cocompact Actions}

In this appendix we record a standard exponential bound on orbit growth for discrete groups acting cocompactly by isometries on a Hadamard manifold. The result follows from the Švarc--Milnor lemma and elementary growth estimates for finitely generated groups. It is used in the main text in the proof of Proposition \ref{prop: heat trace decomposition}.

\begin{prop} \label{lemma:orbit growth}
Let \(X\) be a Hadamard manifold, and let \(\Gamma \leq \mathrm{Isom}(X)\) be a discrete torsion-free subgroup acting properly discontinuously and cocompactly on \(X\). Then there exist constants \(\mu, \eta > 0\), depending only on \(X\) and \(\Gamma\), such that for every \(x \in X\) and every \(R > 0\),
\[
\#\{ g \in \Gamma \mid d_X(x, gx) < R \} \le \mu e^{\eta R}.
\]
In particular, the constants \(\mu\) and \(\eta\) can be chosen independently of the basepoint \(x\).
\end{prop}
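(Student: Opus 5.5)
The bound will come from the Švarc--Milnor lemma together with the trivial growth estimate for finitely generated groups. Compactness is used twice: to make $\Gamma$ quasi-isometric to $X$ with explicit constants, and to remove the dependence on the basepoint.

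First fix a basepoint $o\in X$ and choose $D>0$ so that the closed ball $\bar B(o,D)$ surjects onto $\Gamma\backslash X$; this is possible by cocompactness. Put
\[
S:=\{g\in\Gamma\setminus\{\mathrm{id}\} : d_X(o,go)\le 2D+1\}.
\]
$S$ is finite by proper discontinuity, and $S=S^{-1}$ because $\Gamma$ acts by isometries. The standard Švarc--Milnor argument uses only that $X$ is a geodesic space. We subdivide a geodesic from $o$ to $go$ into pieces of length at most $1$. Each subdivision point is translated into $\bar B(o,D)$, and consecutive translating elements differ by an element of $S\cup\{\mathrm{id}\}$. This shows that $S$ generates $\Gamma$ and that the word length satisfies
\[
|g|_S\le d_X(o,go)+1 \qquad\text{for all } g\in\Gamma .
\]
For this inequality we only need the upper bound on word length, not the full quasi-isometry.

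Next we pass from word length to counting. The number of elements of word length at most $m$ is at most $(2|S|+1)^m$, or more crudely $(|S|+1)^m$ with $S$ symmetric. So at the basepoint $o$,
\[
\#\{g : d_X(o,go)<R\}\le \#\{g: |g|_S\le R+1\}\le (|S|+1)^{R+1},
\]
which has the required form $\mu_0e^{\eta R}$ with $\eta=\log(|S|+1)$.

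The only step needing care is uniformity in $x$, and there are two parts to it. First, for $h\in\Gamma$ we have $d(hx,g\,hx)=d(x,h^{-1}ghx)$, and conjugation is a bijection of $\Gamma$. Hence the counting function is $\Gamma$-invariant in $x$, and we may assume $x\in\bar B(o,D)$. Second, for such $x$ the triangle inequality gives $d(o,go)\le d(x,gx)+2D$. Therefore
\[
\#\{g: d(x,gx)<R\}\le \#\{g:d(o,go)<R+2D\}\le \mu_0 e^{2D\eta}e^{\eta R},
\]
so we can take $\mu:=\mu_0e^{2D\eta}$. Torsion-freeness is not needed for the bound itself, only for the manifold structure used elsewhere in the paper.
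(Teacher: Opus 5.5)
Your proof is correct and follows essentially the same route as the paper. Both arguments use a \v{S}varc--Milnor upper bound on word length at a fixed basepoint, the exponential bound on word balls, and then uniformity in $x$ via conjugation invariance and the triangle inequality with the cocompactness radius $D$. The one difference is that you prove the \v{S}varc--Milnor step directly, obtaining $|g|_S\le d_X(o,go)+1$, where the paper cites it. (Small slip: since $S=S^{-1}$, the bound $(|S|+1)^m$ is the sharper one, not the cruder.)
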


\begin{proof}
Fix \(x_0 \in X\). By the Švarc--Milnor lemma \cite[Prop.~8.19]{bridson_haefliger}, the group \(\Gamma\) is finitely generated and the orbit map \(g \mapsto g x_0\) is a quasi-isometry. Hence, for some constants \(A \ge 1\) and \(B \ge 0\),
\[
A^{-1}|g|_S - B \le d_X(x_0,gx_0) \le A|g|_S + B
\qquad (g \in \Gamma),
\]
where \(|\cdot|_S\) denotes the word metric with respect to a finite generating set \(S\) of \(\Gamma\).

Therefore,
\[
d_X(x_0,gx_0) < R
\implies
|g|_S < A(R+B),
\]
and hence
\[
\{g \in \Gamma : d_X(x_0,gx_0) < R\}
\subseteq
\{g \in \Gamma : |g|_S \le \lfloor A(R+B)\rfloor\}.
\]
Since \(S\) is finite, there exists \(C_1,\eta_1>0\) such that
\[
\#\{g \in \Gamma : |g|_S \le n\} \le C_1 e^{\eta_1 n}
\qquad (n \ge 0).
\]
Applying this with \(n=\lfloor A(R+B)\rfloor\), we obtain
\[
\#\{g \in \Gamma : d_X(x_0,gx_0) < R\} \le \mu_0 e^{\eta_0 R}
\]
for suitable constants \(\mu_0,\eta_0>0\).

Now let \(x \in X\). Since \(\Gamma\) acts cocompactly, there exists \(0<D<\infty\) such that for every \(x\in X\) one can find \(\gamma\in\Gamma\) with
$d_X(x,\gamma x_0)\le D$. Then for any \(g\in \Gamma\),
\[
d_X(\gamma x_0,g\gamma x_0)
\le d_X(\gamma x_0,x)+d_X(x,gx)+d_X(gx,g\gamma x_0)
\le d_X(x,gx)+2D.
\]
Thus
\[
\{g \in \Gamma : d_X(x,gx) < R\}
\subseteq
\{g \in \Gamma : d_X(\gamma x_0,g\gamma x_0) < R+2D\}.
\]
Using \(\Gamma\)-invariance of \(d_X\),
\[
d_X(\gamma x_0,g\gamma x_0)=d_X(x_0,\gamma^{-1}g\gamma x_0),
\]
and conjugation by \(\gamma\) is a bijection of \(\Gamma\). Hence
\[
\#\{g \in \Gamma : d_X(x,gx) < R\}
\le
\#\{h \in \Gamma : d_X(x_0,hx_0) < R+2D\}
\le \mu e^{\eta R}
\]
for suitable constants \(\mu,\eta>0\) independent of \(x\).
\end{proof}

\section{Cartan decomposition and structural data for the collection $\col$} \label{appendix}

 In this section, we detail the Cartan decomposition and structural data (see Definition ~\ref{def: Cartan decomp structural data}) for the collection $\col$ of irreducible symmetric spaces of noncompact type. This data provides the explicit parameters required to compute the geometric invariants $\sigma_1(X)$ and $\sigma_2(X)$ introduced in the proof of Theorem ~\ref{curvature invariant determine spaces}. 

The numerical entries below are obtained from the standard classification
data and Killing-form formulas for the classical Lie algebras
(see \cite{helgason1}) together with the matrix realizations and signature
conventions of Rossmann \cite[\S 3.1]{rossmann}.

The values of \(\dim \mathfrak p\), \(\dim \mathfrak z\), the number of simple
factors \(q\) of $\mathfrak{k}$, and the dimensions of the \(\mathfrak k_i\) are standard and may be
read off from the classification of irreducible symmetric spaces; see, for example,
Helgason \cite[Chapter 3, \S 8; Chapter 10, \S 6.2, Table V]{helgason1}.

Let \(\mathfrak g\) be a real simple Lie algebra of classical type, realized in its standard matrix model (see \cite[\S 3.1]{rossmann}). Then its Killing form has the form
\[
B_{\mathfrak g}(X,Y)=c_{\mathfrak g}\,\operatorname{Re}\operatorname{Tr}(XY),
\]
where the constant \(c_{\mathfrak g}\) depends only on the complexification \(\mathfrak g_{\mathbb C}\) (see \cite[Ch. III, \S 8]{helgason1}) and is given by
\[
\begin{cases}
c_{\mathfrak g} = 2n, & \text{if } \mathfrak{g}_{\mathbb{C}} \cong \mathfrak{sl}(n,\mathbb C),\\[2mm]
c_{\mathfrak g} = N-2, & \text{if } \mathfrak{g}_{\mathbb{C}} \cong \mathfrak{so}(N,\mathbb C),\\[2mm]
c_{\mathfrak g} = 2(n+1), & \text{if } \mathfrak{g}_{\mathbb{C}} \cong \mathfrak{sp}(n,\mathbb C).
\end{cases}
\]
In the cases considered here, \(\operatorname{Tr}(XY)\) is real-valued on \(\mathfrak g\), so
\[
B_{\mathfrak g}(X,Y)=c_{\mathfrak g}\,\operatorname{Tr}(XY).
\]
This follows from the fact that for each classical complex simple Lie algebra \(\mathfrak s\) in its defining matrix realization, there exists \(c\neq 0\) such that
\[
B(X,Y)=c\,\operatorname{Tr}(XY)\quad\text{for all }X,Y\in\mathfrak{s}
\]
(see \cite[\S 1.18, Exercise 13]{knapp}), together with the compatibility of Killing forms under complexification (see \cite[Lemma 6.1]{helgason1}).

\medskip

For each simple factor \(\mathfrak k_s \subset \mathfrak k\), the restriction
\(B_{\mathfrak g}\big|_{\mathfrak k_s}\) is an invariant symmetric bilinear form on
\(\mathfrak k_s\). Since \(\mathfrak k_s\) is simple, it is proportional to its intrinsic Killing form \(B_{\mathfrak k_s}\), so there exists a constant \(b_s\) such that
\[
B_{\mathfrak k_s} = b_s\, B_{\mathfrak g}\big|_{\mathfrak k_s}.
\]
Using the standard matrix realizations of the classical real forms and the corresponding embeddings of the compact factors, both Killing forms are given by trace formulas with constants \(c_{\mathfrak g}\) and \(c_{\mathfrak k_s}\). Comparing these trace formulas yields the coefficient $b_s$, and hence
\[
a_s = 1-b_s.
\]
For instance, consider Type AIII, where $\mathfrak{g} = \mathfrak{su}(p,q)$. The complexification is $\mathfrak{sl}(p+q, \mathbb{C})$, yielding $c_{\mathfrak{g}} = 2(p+q)$. The maximal compact subgroup $K = S(U_p \times U_q)$ has Lie algebra $\mathfrak{k} = \mathfrak{s}(\mathfrak{u}_p \oplus \mathfrak{u}_q)$, which contains the simple factors $\mathfrak{k}_1 = \mathfrak{su}(p)$ and $\mathfrak{k}_2 = \mathfrak{su}(q)$. Focusing on the first factor, its complexification is $\mathfrak{sl}(p, \mathbb{C})$, yielding $c_{\mathfrak{k}_1} = 2p$. It immediately follows that $b_1 = 2p/2(p+q) = p/(p+q)$, which gives $a_1 = 1 - b_1 = q/(p+q)$.

Applying this uniform methodology yields the \textit{Cartan decomposition and structural data} for each family in $\col$:
 \begin{enumerate}[label=\textbf{(\Roman*)}]
\item \textbf{Type AI:} $\mathrm{SL}(n,\mathbb{R})/\mathrm{SO}(n)$, $n \ge 5$.
\[
\dim\mathfrak{p} = \frac{n(n+1)}{2}-1,\quad
\dim\mathfrak{z}=0,\quad
\dim\mathfrak{k}_1 = \frac{n(n-1)}{2},\quad
a_1 = \frac{n+2}{2n}.
\]

\item \textbf{Type AII:} $\mathrm{SU}^*(2n)/\mathrm{Sp}(n)$, $n \ge 3$.
\[
\dim\mathfrak{p} = 2n^2 - n - 1,\quad
\dim\mathfrak{z}=0,\quad
\dim\mathfrak{k}_1 = n(2n+1),\quad
a_1 = \frac{n-1}{2n}.
\]

\item \textbf{Type AIII:} $\mathrm{SU}(p,q)/\mathrm{S}(\mathrm{U}_p \times \mathrm{U}_q)$, $p \ge q \ge 2$.
\[
\dim\mathfrak{p} = 2pq,\quad
\dim\mathfrak{z}=1,\quad
\dim\mathfrak{k}_1 = p^2-1,\quad
\dim\mathfrak{k}_2 = q^2-1,
\]
\[
a_1 = \frac{q}{p+q},\quad
a_2 = \frac{p}{p+q}.
\]

\item \textbf{Type BDI:} $\mathrm{SO}_0(p,q)/(\mathrm{SO}(p)\times\mathrm{SO}(q))$, $p \ge q \ge 5$.
\[
\dim\mathfrak{p} = pq,\quad
\dim\mathfrak{z}=0,\quad
\dim\mathfrak{k}_1 = \frac{p(p-1)}{2},\quad
\dim\mathfrak{k}_2 = \frac{q(q-1)}{2},
\]
\[
a_1 = \frac{q}{p+q-2},\quad
a_2 = \frac{p}{p+q-2}.
\]

\item \textbf{Type DIII:} $\mathrm{SO}^*(2n)/\mathrm{U}(n)$, $n \ge 3$.
\[
\dim\mathfrak{p} = n(n-1),\quad
\dim\mathfrak{z}=1,\quad
\dim\mathfrak{k}_1 = n^2-1,\quad
a_1 = \frac{n-2}{2(n-1)}.
\]

\item \textbf{Type CI:} $\mathrm{Sp}(n,\mathbb{R})/\mathrm{U}(n)$, $n \ge 3$.
\[
\dim\mathfrak{p} = n(n+1),\quad
\dim\mathfrak{z}=1,\quad
\dim\mathfrak{k}_1 = n^2-1,\quad
a_1 = \frac{n+2}{2(n+1)}.
\]

\item \textbf{Type CII:} $\mathrm{Sp}(p,q)/(\mathrm{Sp}(p)\times\mathrm{Sp}(q))$, $p \ge q \ge 2$.
\[
\dim\mathfrak{p} = 4pq,\quad
\dim\mathfrak{z}=0,\quad
\dim\mathfrak{k}_1 = p(2p+1),\quad
\dim\mathfrak{k}_2 = q(2q+1),
\]
\[
a_1 = \frac{q}{p+q+1},\quad
a_2 = \frac{p}{p+q+1}.
\]
\end{enumerate}

\smallskip
\noindent  

Having established the explicit Lie-theoretic parameters for each family, we now evaluate the geometric invariants $\sigma_1(X) = \dim(\mathfrak{p})$ and $\sigma_2(X) = |\operatorname{Riem}|^2$ introduced in the proof of Theorem ~\ref{curvature invariant determine spaces}. Substituting the structural data listed above into the formulas derived in Proposition ~\ref{curvature invariants summary} yields the following closed-form expressions for $\sigma_1,\sigma_2$ for the spaces in $\col$:

\begin{table}[htbp]
\centering
\small
\begin{tabularx}{\textwidth}{
  >{\raggedright\arraybackslash}X 
  c 
  c 
  >{\centering\arraybackslash}X
}
\toprule
\textbf{The Space \(X\)} & \textbf{Type} & \(\boldsymbol{\sigma_1(X)}\) & \(\boldsymbol{\sigma_2(X)}\) \\
\midrule
\(\mathrm{SL}(n,\mathbb{R})/\mathrm{SO}(n)\) (\(n \ge 5\)) 
& AI 
& \(\frac{(n-1)(n+2)}{2}\) 
& \(\frac{(n-1)(n+2)^2}{8n}\) \\[6pt]

\(\mathrm{SU}^*(2n)/\mathrm{Sp}(n)\) (\(n \ge 3\)) 
& AII 
& \((2n+1)(n-1)\)
& \(\frac{(2n+1)(n-1)^2}{4n}\) \\[6pt]

\(\mathrm{SU}(p,q)/\mathrm{S}(\mathrm{U}_p \times \mathrm{U}_q)\) (\(p \ge q \ge 2\)) 
& AIII 
& \(2pq\)
& \(\frac{pq(2pq+2)}{(p+q)^2}\) \\[6pt]

\(\mathrm{SO}_o(p,q)/(\mathrm{SO}(p)\times\mathrm{SO}(q))\) (\(p \ge q \ge 5\))
& BDI 
& \(pq\)
& \(\frac{pq(2pq-(p+q))}{2(p+q-2)^2}\) \\[6pt]

\(\mathrm{SO}^*(2n)/\mathrm{U}(n)\) (\(n \ge 3\))
& DIII 
& \(n(n-1)\)
& \(\frac{n(n^2 - 3n + 4)}{4(n-1)}\) \\[6pt]

\(\mathrm{SP}(n,\mathbb{R})/\mathrm{U}(n)\) (\(n \ge 3\))
& CI 
& \(n(n+1)\)
& \(\frac{n(n^2 + 3n + 4)}{4(n+1)}\) \\[6pt]

\(\mathrm{SP}(p,q)/(\mathrm{SP}(p)\times\mathrm{SP}(q))\) (\(p \ge q \ge 2\))
& CII 
& \(4pq\)
& \(\frac{pq(4pq+p+q)}{(p+q+1)^2}\) \\
\bottomrule
\end{tabularx}
\caption{Closed-form expressions for the geometric invariants $\sigma_1$ and $\sigma_2$ across the collection $\col$.}
\label{tab:symmetric-spaces-sigma invariant}
\end{table}

\section{Verification of Invariant Injectivity} \label{sec: collision check}

In this section, we verify the injectivity of the invariant map $\Phi$ on the collection
$\col$. By the discussion in the proof of Theorem ~\ref{curvature invariant determine spaces},
it suffices to rule out collisions of the pair $(\sigma_1,\sigma_2)$ in all cases except the
potential cross-family collisions among $\mathrm{AIII}$, $\mathrm{BDI}$, and $\mathrm{CII}$. The remaining cases are
treated by a case-by-case analysis of the associated Diophantine equations.

For the following verifications, we refer to the closed-form expressions for $\sigma_1$ and $\sigma_2$ established in Table ~\ref{tab:symmetric-spaces-sigma invariant}. We start by showing that there is no collision among the four one-parameter families.

\begin{prop} \label{prop: collision check 1}
No two distinct spaces among the families \emph{AI}, \emph{AII}, \emph{DIII}, and \emph{CI} have the same pair \((\sigma_1,\sigma_2)\).
\end{prop}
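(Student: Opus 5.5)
The plan is to use the table of $(\sigma_1,\sigma_2)$ values directly. Within each of the four one-parameter families, injectivity already follows from $\sigma_1$ alone. Indeed $\sigma_1$ is strictly increasing in $n$ in every family:
\[
\tfrac{(n-1)(n+2)}{2},\qquad (2n+1)(n-1),\qquad n(n-1),\qquad n(n+1),
\]
are all increasing for $n\ge 2$. So it remains to rule out collisions between two different families. For each of the six unordered pairs I will equate the two $\sigma_1$ values, which gives a quadratic Diophantine relation between the parameters $m$ and $n$. I will then substitute into the equality of the $\sigma_2$ values and show that no admissible solution survives.

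The key simplification is to write every $\sigma_2$ as $\sigma_1$ times a simple rational function, $\sigma_2=\sigma_1\cdot\rho(n)$:
\[
\rho_{\mathrm{AI}}=\tfrac{n+2}{4n},\quad \rho_{\mathrm{AII}}=\tfrac{n-1}{4n},\quad \rho_{\mathrm{DIII}}=\tfrac{n^2-3n+4}{4(n-1)^2},\quad \rho_{\mathrm{CI}}=\tfrac{n^2+3n+4}{4(n+1)^2}.
\]
Once $\sigma_1(X)=\sigma_1(Y)$, equality of $\sigma_2$ reduces to $\rho_X=\rho_Y$.

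Next I will locate the range of each $\rho$ on its admissible parameters:
\begin{itemize}
\item $\rho_{\mathrm{AI}}=\tfrac14+\tfrac1{2n}\in(\tfrac14,\tfrac{7}{20}]$ for $n\ge5$.
\item $\rho_{\mathrm{AII}}=\tfrac14-\tfrac1{4n}<\tfrac14$.
\item $\rho_{\mathrm{DIII}}=\tfrac14\bigl(1-\tfrac{1}{n-1}+\tfrac{2}{(n-1)^2}\bigr)$. This equals $\tfrac14$ at $n=3$ and is strictly less than $\tfrac14$ for $n\ge4$.
\item $\rho_{\mathrm{CI}}=\tfrac14\bigl(1+\tfrac{1}{n+1}+\tfrac{2}{(n+1)^2}\bigr)>\tfrac14$.
\end{itemize}
These ranges immediately separate AII and DIII from AI and CI, except possibly DIII at $n=3$. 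That case has $\sigma_1=6$ and $\rho=\tfrac14$, and it cannot collide with AI or CI because those have $\rho>\tfrac14$.

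Two pairs remain that need work. For AI versus CI, I will solve $\tfrac{m+2}{4m}=\tfrac{n^2+3n+4}{4(n+1)^2}$, which gives $m=\tfrac{2(n+1)^2}{n+3}$. Integrality forces $(n+3)\mid 8$, so $n\in\{1,5\}$. With $n=5$ we get $m=9$, and then $\sigma_1$ is $44$ for AI against $30$ for CI, so they differ. For AII versus DIII, setting $\rho$ equal gives $\tfrac{m-1}{m}=\tfrac{n^2-3n+4}{(n-1)^2}$, i.e.\ $m=\tfrac{(n-1)^2}{n-3}$. Integrality forces $(n-3)\mid 4$, so $n\in\{4,5,7\}$, giving $m\in\{9,8,9\}$. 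I will then check $\sigma_1$ in each case: $12$ against $153$, $20$ against $119$, and $42$ against $153$. None match.

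The main obstacle is only bookkeeping: keeping the signs and monotonicity claims for $\rho$ correct, and remembering the boundary case DIII with $n=3$. Using $\rho$ rather than solving the $\sigma_1$-equation first avoids genuinely hard Pell-type Diophantine problems.
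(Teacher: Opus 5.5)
Your proposal is correct and follows essentially the paper's route: the paper uses the same ratio (scaled by $4$, so $\rho>1$ versus $\rho\le 1$) to separate $\{\mathrm{AI},\mathrm{CI}\}$ from $\{\mathrm{AII},\mathrm{DIII}\}$ and solves the same ratio equations. The only difference is the last step: the paper substitutes the resulting expression for the AI (resp.\ AII) parameter back into the $\sigma_1$-equation and obtains the factorable quartics $(m-1)(m+2)(m^2+m+2)=0$ and $(m-2)(m+1)(m^2-m+2)=0$, while your divisibility reduction to finitely many candidates is equally rigorous, apart from one harmless slip ($\sigma_1^{\mathrm{AII}}(9)=19\cdot 8=152$, not $153$).
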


\begin{proof}
Note that within each one-parameter family, collisions cannot occur,
since \(\sigma_1(X)=\dim(X)\) uniquely determines the parameter.
To analyze possible collisions between different families, we introduce,
for \(X \in \col\), the normalized invariant ratio
\[
\rho(X):=4\frac{\sigma_2(X)}{\sigma_1(X)}.
\]
Using the expressions from Table~\ref{tab:symmetric-spaces-sigma invariant},
we compute this ratio for the four one-parameter families:

\begin{enumerate}
    \item $\mathrm{AI}$ ($n \ge 5$): $\rho = \frac{n+2}{n} = 1 + \frac{2}{n} > 1$.
    \item $\mathrm{AII}$ ($n \ge 3$): $\rho = \frac{n-1}{n} = 1 - \frac{1}{n} < 1$.
    \item $\mathrm{DIII}$ ($n \ge 3$): $\rho = \frac{n^2-3n+4}{(n-1)^2} = 1 + \frac{3-n}{(n-1)^2} \le 1$.
    \item $\mathrm{CI}$ ($n \ge 3$): $\rho = \frac{n^2+3n+4}{(n+1)^2} = 1 + \frac{n+3}{(n+1)^2} > 1$.
\end{enumerate}
This strict partitioning instantly rules out any collision between the families $\{ \text{AI}, \text{CI} \}$ and the families $\{ \text{AII}, \text{DIII} \}$. It remains only to check for collisions within each respective partition.
\begin{enumerate}

\item[\textbf{AI vs CI:}]
Eliminating from \(\sigma_1^{\mathrm{AI}}(n)=\sigma_1^{\mathrm{CI}}(m)\) and
\(\sigma_2^{\mathrm{AI}}(n)=\sigma_2^{\mathrm{CI}}(m)\) gives
\[
\frac{n+2}{n}=\frac{m^2+3m+4}{(m+1)^2} \implies n=\frac{2(m+1)^2}{m+3}.
\]
Substituting into \(\sigma_1^{\mathrm{AI}}(n)=\sigma_1^{\mathrm{CI}}(m)\) yields \\
$
m^4+2m^3+m^2-4=(m-1)(m+2)(m^2+m+2)=0,
$
which has no solution with \(m\ge 3\).
\smallskip
\item[\textbf{AII vs DIII:}]
Eliminating from \(\sigma_1^{\mathrm{AII}}(n)=\sigma_1^{\mathrm{DIII}}(m)\) and
\(\sigma_2^{\mathrm{AII}}(n)=\sigma_2^{\mathrm{DIII}}(m)\) gives
\[
\frac{n-1}{n}=\frac{m^2-3m+4}{(m-1)^2}.
\]
If \(m\neq 3\), then
\[
n=\frac{(m-1)^2}{m-3}.
\]
Substituting into \(\sigma_1^{\mathrm{AII}}(n)=\sigma_1^{\mathrm{DIII}}(m)\) yields
\[
(m-2)(m+1)(m^2-m+2)=0.
\]
So the only positive integer root is \(m=2\), which is not allowed. For \(m=3\), the first equality gives
$
2n^2-n-7=0,
$ which has no integral solution.
\smallskip

\end{enumerate}
Thus no two distinct spaces among the four one-parameter families have the
same pair $(\sigma_1,\sigma_2)$.
\end{proof}

We now check that $\mathrm{AIII}$ does not collide with AI, AII, DIII, or CI.
\begin{prop} \label{prop: collision check 2}
Let \(X^{\mathrm{AIII}}_{p,q}=\mathrm{SU}(p,q)/\mathrm{S}(\mathrm{U}_p\times \mathrm{U}_q)\) with \(p\ge q\ge 2\), and let \(Y\) be any space in one of the families $\mathrm{AI}, \mathrm{AII}, \mathrm{DIII}$, or $\mathrm{CI}$, with admissible parameter $n$. Then
\[
\bigl(\sigma_1(X^{\mathrm{AIII}}_{p,q}),\,\sigma_2(X^{\mathrm{AIII}}_{p,q})\bigr)
\neq
\bigl(\sigma_1(Y),\,\sigma_2(Y)\bigr).
\]
\end{prop}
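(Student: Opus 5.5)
The plan is to reuse the normalized ratio $\rho(X)=4\sigma_2(X)/\sigma_1(X)$ from the proof of Proposition~\ref{prop: collision check 1} and reduce each comparison to an elementary inequality or a divisibility condition. For $X^{\mathrm{AIII}}_{p,q}$, Table~\ref{tab:symmetric-spaces-sigma invariant} gives
\[
\sigma_1=2P,\qquad \rho=\frac{4(P+1)}{s^2},\qquad s:=p+q,\quad P:=pq .
\]
Since $s^2-4P=(p-q)^2\ge 0$, we get the key estimate
\[
\rho\le 1+\frac{4}{s^2},\qquad s^2\ge 4P=2\sigma_1 .
\]
A collision with $Y$ forces $2P=\sigma_1(Y)$ and $\rho(X)=\rho(Y)$. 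I treat the four families separately, following the partition $\rho>1$ (AI, CI) versus $\rho\le 1$ (AII, DIII) already used.

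\textbf{AI and CI.} Here I expect the inequality alone to suffice.

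For AI, $\rho(Y)=1+2/n$. Combining this with the key estimate gives $2/n\le 4/s^2$, i.e. $s^2\le 2n$. But $s^2\ge 2\sigma_1=(n-1)(n+2)>2n$ for $n\ge 5$, a contradiction.

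For CI, $\rho(Y)=1+(n+3)/(n+1)^2$. With $s^2\ge 2n(n+1)$ this forces
\[
\frac{n+3}{(n+1)^2}\le \frac{2}{n(n+1)},
\]
that is, $n^2+n-2\le 0$. This fails for $n\ge 3$.

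\textbf{AII and DIII.} Here the inequality is not decisive, so I solve for $s^2$ exactly and use integrality.

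For AII, equating $\rho$ gives $s^2(1-1/n)=4P+4=4n^2-2n+2$. Hence
\[
s^2=\frac{n(4n^2-2n+2)}{n-1}.
\]
Reducing the numerator modulo $n-1$ shows that $n-1\mid 4$, so $n\in\{3,5\}$. These give $s^2=48$ and $s^2=115$ respectively, and neither is a perfect square.

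For DIII, put $m=n-1\ge 2$. Equating $\rho$ gives
\[
s^2=\frac{2(m^2+m+2)\,m^2}{m^2-m+2}.
\]
Reducing modulo $D=m^2-m+2$ uses $m^2\equiv m-2$, hence $m^3\equiv -m-2$, so that $D\mid 4(m+2)$. This is impossible once $m^2-5m-6>0$, i.e. for $m\ge 7$. The finitely many remaining values $n=3,\dots,7$ are checked directly. In each case, either $s^2$ is not an integer square, or the resulting $s$ together with $P=n(n-1)/2$ gives a discriminant $s^2-4P$ that is not a square. Failing that, the resulting $p,q$ violate $p\ge q\ge 2$.

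The main obstacle is this last DIII analysis. It is the only place where a genuine, if finite, case check is needed, because the ratio inequality is too weak when $\rho\le 1$. If the divisibility bookkeeping proves messy, I would fall back to requiring both $s^2$ and $(p-q)^2=s^2-4P$ to be squares, and bound $n$ by comparing these with consecutive squares.
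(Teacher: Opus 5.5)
Your proof is correct, but it follows the paper only in part. For AII and DIII your argument is the paper's. The paper also solves for $s^2$; its expressions $4n^2+2n+4+\frac{4}{n-1}$ and $2n^2+2-\frac{4(n+1)}{n^2-3n+4}$ are your quotients after division. For AII it gets $n\in\{3,5\}$ with $s^2\in\{48,115\}$. For DIII it bounds the remainder in $(0,1)$ for $n\ge 8$, which is equivalent to your $D>4(m+2)$ for $m\ge 7$, leaving $3\le n\le 7$. In that finite check you should state what actually happens rather than list possible failure modes:
\begin{itemize}
\item $n=4,5,6$ give non-integral $s^2$;
\item $n=7$ gives $s^2=99$, not a square;
\item $n=3$ gives $s=4$ and $P=3$, i.e.\ $(p,q)=(3,1)$, which is excluded by $q\ge 2$. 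This is the accidental isomorphism $\mathfrak{so}^*(6)\cong\mathfrak{su}(3,1)$.
\end{itemize}
The genuine difference is in AI and CI. There the paper again argues by integrality: $(n+2)\mid 8$ forces $n=6$ and $s^2=33$ for AI, and a remainder $\frac{4(n-1)}{n^2+3n+4}\in(0,1)$ rules out CI. You instead use only $(p-q)^2=s^2-4P\ge 0$. Indeed, the collision equations force
\[
s^2-4P=-\tfrac{2(n-2)(n+1)}{n+2}\ \text{(AI)},\qquad s^2-4P=-(n-1)\bigl(2-\tfrac{4}{n^2+3n+4}\bigr)\ \text{(CI)},
\]
both negative, so there is not even a real solution $(p,q)$. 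Your route needs no arithmetic in these two cases. It mirrors the paper's own AM--GM bound $\rho^{\mathrm{CII}}<1$ in Proposition~\ref{prop: collision check 4}, and it explains why number theory is unavoidable only for AII and DIII, where real solutions do exist. The paper's route has the advantage of uniformity: one closed formula for $s^2$ per family, followed by a single inspection of the fractional remainder.
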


\begin{proof}
Write
\[
s:=p+q.
\]
Since
\[
\sigma_1^{\mathrm{AIII}}(p,q)=2pq,
\qquad
\sigma_2^{\mathrm{AIII}}(p,q)=\frac{pq(2pq+2)}{s^2},
\]
any potential collision with a one-parameter family \(Y\) forces their invariants to coincide. Setting $M = \sigma_1(Y)$, this gives us
\[
2pq = M \qquad \text{from which we get} \qquad \sigma_2^{\mathrm{AIII}}(p,q) = \frac{M(M+2)}{2s^2}.
\]

Then  equating the \(\sigma_2\)-invariants yields an explicit expression for \(s^2\). In each case we obtain:

\[
\renewcommand{\arraystretch}{1.6}
\begin{array}{c|c|c}
\text{Family }Y & M=\sigma_1(Y) & s^2 \\ \hline
\mathrm{AI}(n) &
\dfrac{(n-1)(n+2)}{2} &

n^2-n+4-\dfrac{8}{n+2}
\\
\mathrm{AII}(n) &
(2n+1)(n-1) &

4n^2+2n+4+\dfrac{4}{n-1}
\\
\mathrm{DIII}(n) &
n(n-1) &

2n^2+2-\dfrac{4(n+1)}{n^2-3n+4}
\\
\mathrm{CI}(n) &
n(n+1) &

2n^2+2+\dfrac{4(n-1)}{n^2+3n+4}
\end{array}
\]

Because $s$ is an integer, $s^2$ must be an integer perfect square. We can systematically rule out all four cases by analyzing the remainder terms in the table above:

\begin{enumerate}
    \item $\mathrm{AI} \mbox{ and } \mathrm{AII}$ (Divisibility Constraints): For $s^2$ to be an integer, the fractional remainders dictate that $(n+2) \mid 8$ (for $\mathrm{AI}$ ) and $(n-1) \mid 4$ (for $\mathrm{AII}$). Within our admissible parameter bounds, these conditions restrict us entirely to $n=6$ (for AI) and $n \in \{3,5\}$ (for AII). Testing these specific values yields $s^2 \in \{33, 48, 115\}$, none of which are perfect squares.
    \item $\mathrm{DIII} \mbox{ and } \mathrm{CI}$ (Fractional Bounds): For CI, the remainder satisfies $0 < \frac{4(n-1)}{n^2+3n+4} < 1$ for all $n \ge 3$, meaning $s^2 \notin \mathbb{Z}$ which leads to a contradiction. Similarly, for DIII, the remainder is strictly bounded by $0 < \frac{4(n+1)}{n^2-3n+4} < 1$ for all $n \ge 8$, so \(s^2\notin \mathbb Z\). For \(3\le n\le 7\), a direct check shows that the only integral values occur at \(n=3\) and \(n=7\), and only \(n=3\) gives a square. But \(n=3\) implies $M=6 \implies 2pq = 6 \implies pq=3$, which contradicts the constraint $p, q \ge 2$.
\end{enumerate}

Thus no collision between AIII and any of AI, AII, DIII, or CI can occur.

\end{proof}

Now we check that there is no collision between BDI and the four one-parameter families

\begin{prop} \label{prop: collision check 3}
Let $p\ge q\ge 5$. Then the pair $(\sigma_1,\sigma_2)$ for the $\mathrm{BDI}$ family does not coincide with that of any of the four one-parameter families $\mathrm{AI}, \mathrm{AII}, \mathrm{DIII}, \mathrm{CI}$.
\end{prop}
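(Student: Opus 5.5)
The plan is to follow the template of Proposition~\ref{prop: collision check 2}: use the $\sigma_1$-equation to fix $pq$, and then show that the $\sigma_2$-equation forces the single remaining unknown $s:=p+q$ to be a root of an explicit quadratic with no admissible integer root. Suppose the BDI space with parameters $(p,q)$ collides with $Y$ in one of AI, AII, DIII, CI with parameter $n$, and put $M:=\sigma_1(Y)$. From Table~\ref{tab:symmetric-spaces-sigma invariant} we get $pq=M$. It is convenient to compare the normalized ratio $\rho=4\sigma_2/\sigma_1$ from the proof of Proposition~\ref{prop: collision check 1}. For BDI this ratio is
\[
\rho^{\mathrm{BDI}}=\frac{2(2pq-s)}{(s-2)^2}=\frac{2(2M-s)}{(s-2)^2}.
\]
Equating it with $\rho(Y)$, which is one of $1+2/n$, $1-1/n$, $(n^2-3n+4)/(n-1)^2$, $(n^2+3n+4)/(n+1)^2$, gives in terms of $u:=s-2$ the quadratic
\[
\rho(Y)\,u^2+2u+4-4M=0 .
\]

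Next I would clear denominators so that each case becomes an integral quadratic. For AI, with $M=(n-1)(n+2)/2$, it reads $(n+2)u^2+2nu-2n(n^2+n-4)=0$, and the other three families give similar equations. The positive root is
\[
u=\frac{-1+\sqrt{1+\rho(Y)(4M-4)}}{\rho(Y)},
\]
so integrality of $u$ requires the discriminant, multiplied by the square of the denominator of $\rho(Y)$, to be a perfect square. For AI this is the requirement that $n^2+(n+2)(2n^3+2n^2-8n)$ be a square; for AII, DIII and CI one gets analogous explicit polynomials in $n$.

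The main step is to show that these polynomials are never perfect squares for admissible $n$. The leading terms have the form $2n^4\cdot(\text{const})$, and $\sqrt2$ is irrational, so there is no polynomial square root to sandwich against. I would therefore not try a pure sandwich between consecutive squares. Instead I would use an additional constraint coming from $p$ and $q$ themselves. Since $p$ and $q$ are the roots of $t^2-st+M$, the quantity $s^2-4M=(p-q)^2$ must also be a perfect square. In addition, $q\ge5$ gives $10\le s\le M/5+5$, and $s\ge 2\sqrt M$. Combining $u$ rational, $u^2\approx 4M/\rho(Y)$, and the requirement that $(u+2)^2-4M$ be a square, I expect a Pell-type obstruction or a contradiction modulo a small prime (for instance $8$ or $3$) in each family. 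The fallback is a direct finite check for small $n$ together with an asymptotic bound showing that $s^2-4M$ lies strictly between consecutive squares once $n$ is large. This is where I expect the real difficulty: the four families have to be treated separately, and the DIII and CI cases have quadratic denominators $(n\mp1)^2$, which make the integrality bookkeeping heavier.

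Finally, in each case I would also dispose of the exceptional small values of $n$ where the divisibility conditions on the denominators can be met. As in Proposition~\ref{prop: collision check 2}, I would test these values directly and verify that the resulting $s$ is either non-integral or violates $q\ge5$ (equivalently, $pq=M$ has no factorization with both factors at least $5$). Together with the partition by $\rho$ (BDI has $\rho\to 1$ from either side depending on $s$), this rules out every collision.
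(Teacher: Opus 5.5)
Your reduction is correct and matches the paper's first step. Equating $\sigma_1$ gives $pq=M=\sigma_1(Y)$, and equating $\rho^{\mathrm{BDI}}=2(2M-s)/(s-2)^2$ with $\rho(Y)$ gives a quadratic in $s$; your AI equation $(n+2)u^2+2nu-2n(n^2+n-4)=0$ is right. There is a genuine gap, however. The whole content of the proposition is showing that these quadratics have no admissible integer root, and you never prove this for any of the four families. You only list methods you expect to work, and the ones you name cannot work as stated.

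Each system has an honest integer solution coming from a low-dimensional isomorphism:
AI at $n=4$ with $(p,q)=(3,3)$, from $\mathfrak{sl}(4,\mathbb R)\cong\mathfrak{so}(3,3)$;
AII at $n=2$ with $(5,1)$, from $\mathfrak{su}^*(4)\cong\mathfrak{so}(5,1)$;
DIII at $n=4$ with $(6,2)$, from $\mathfrak{so}^*(8)\cong\mathfrak{so}(6,2)$;
CI at $n=2$ with $(3,2)$, from $\mathfrak{sp}(2,\mathbb R)\cong\mathfrak{so}(3,2)$.
Reducing modulo any fixed $m$, these give solutions for every $n$ in the corresponding residue class. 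So no congruence obstruction (mod $8$, mod $3$, or any modulus) can exist; the exclusion must use size.

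Passing to ``the discriminant is never a square'' also discards the integrality of $s$. For AI it leaves the integer points of the genus-one quartic $y^2=2n^4+6n^3-3n^2-16n$, which is a square at $n=2,4$. Finally, your reason for abandoning the sandwich is wrong for DIII and CI. There the discriminant of the cleared quadratic in $s$ is $4(n\mp1)^2Q(x)$ with $x=n\mp1$ and $Q(x)=4x^4+x^2\pm12x-8$, and its leading coefficient is a perfect square.

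The missing ingredients are those of the paper.

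For AI, rewrite the equation as $(s-2)^2=2n(n-1)-2s+\frac{4s}{n+2}$. Integrality forces $(n+2)\mid 4s$. Setting $4s=k(n+2)$ turns it into $(k^2-32)n^2+4(k^2-2k+8)n+4(k-4)^2=0$. This has no positive root for $k\ge6$, while $k\le5$ yields only $n\in\{0,2,4\}$.

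For AII, $(s-2)^2=4n(2n+1)-2s-\frac{2s}{n-1}$ forces $(n-1)\mid 2s$. Setting $2s=k(n-1)$ gives $(k^2-32)n^2-(2k^2+4k+16)n+(k+4)^2=0$, whose discriminant $16(-k^3+9k^2+72k+144)$ is negative for $k\ge15$. The finitely many remaining $k$ give no integer $n\ge3$.

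The point is that the quotient $k$ is essentially $4\sqrt2$. Parametrizing by $k$, instead of taking a square root, turns the irrational leading coefficient into a sign condition on $k^2-32$.

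For DIII and CI, one has $(2x^2)^2<Q(x)<(2x^2+1)^2$ for $x\ge4$ (DIII), resp. $x\ge13$ (CI), plus a finite check.

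Incidentally, your remark $s\ge2\sqrt M$ already disposes of AI. Since $\rho^{\mathrm{BDI}}$ is decreasing in $s$ on $[2\sqrt M,M+1]$, one has $\rho^{\mathrm{BDI}}\le\sqrt M/(\sqrt M-1)$, and this is strictly less than $1+2/n$ exactly when $n>4$. The same inequality handles CI for $n\ge6$, with $n\le5$ checked directly. AII and DIII, however, genuinely need an arithmetic argument.
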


\begin{proof}
Let $s = p+q$ and $M = pq$. For BDI, an invariant collision with a space $Y$ from one of the four one parameter families implies $\sigma_1(Y) = M$, and equating the invariant ratios yields $\frac{\sigma_2(Y)}{\sigma_1(Y)} = \frac{2M - s}{2(s-2)^2}$. We divide the four potential collisions into two analytical groups based on how the resulting Diophantine equations are constrained:

\begin{enumerate}
    \item $\mathrm{AI} \mbox{ and } \mathrm{ AII}$ (Divisibility Constraints): Equating the ratios and isolating $(s-2)^2$ yields fractional terms that strictly constrain $s$:
    \begin{enumerate}
        \item $\mathrm{AI}:$ 
$\displaystyle (s-2)^2=2n(n-1)-2s+\frac{4s}{n+2}.$ Since $s \in \mathbb{Z}$, we require $(n+2) \mid 4s$. Substituting $4s = k(n+2)$ yields the quadratic in $n$:
\[
(k^2-32)n^2+4(k^2-2k+8)n+4(k-4)^2=0.
\]
Thinking of the above equation as a quadratic in \(n\) with coefficients depending polynomially on \(k\), we observe that for \(k \geq 6\), all coefficients are positive. Hence, for every \(n \geq 0\), the left-hand side is strictly positive. Therefore, the equation admits no nonnegative real roots. A direct check for \(k<6\) shows that the only integral roots occur when \(k=3\) or \(k=4\). For \(k=3\), the only integral root is \(n=2\), while for \(k=4\), the integral roots are \(n=0,4\). In particular, all integral roots satisfy \(n<5\). Hence no admissible solution exists.
\item $\mathrm{AII}:$ 
$\displaystyle (s-2)^2=4n(2n+1)-2s-\frac{2s}{n-1}.$ Here, $(n-1) \mid 2s$. Substituting $2s = k(n-1)$ produces
\[
(k^2-32)n^2-(2k^2+4k+16)n+(k+4)^2=0.
\]
Its discriminant is
\[
\Delta=16(-k^3+9k^2+72k+144).
\]
Since the cubic is strictly decreasing for \(k>6\) and is negative at \(k=15\), we have \(\Delta<0\) for all \(k\geq 15\). Hence \(\Delta\geq 0\) only for \(1\leq k\leq 14\). Checking these finite cases shows that the discriminant is a perfect square only for \(k\in\{8,12\}\). The corresponding roots are \(\frac{9}{2},1\) for \(k=8\) and \(2,\frac{8}{7}\) for \(k=12\). Hence, neither \(k=8\) nor \(k=12\) yields an integer root \(n\geq 3\).

    \end{enumerate}
    \item $\mathrm{DIII} \mbox{ and } \mathrm{ CI}$ (Discriminant Bounds): Equating the ratios for these two families yields quadratic equations in $s$, forcing their discriminants $\Delta$ to be perfect squares:
    \begin{enumerate}
        \item $\mathrm{DIII}$:  The discriminant factors as $\Delta = 4(n-1)^2 Q(x)$, where $x = n-1$ and $Q(x) = 4x^4 + x^2 + 12x - 8$. For all $x \ge 4$ (corresponding to $n \ge 5$), $Q(x)$ is strictly bounded by consecutive perfect squares: $(2x^2)^2 < Q(x) < (2x^2+1)^2$. Thus, $\Delta$ cannot be a square. Since $pq=n(n-1)$ and $p,q\ge5$, we must have $n(n-1)\ge25$, hence $n\ge6$, and in this range we have shown $\Delta$ is never a perfect square. Hence no solution exists.
        
        \item $\mathrm{CI}$: The discriminant factors as $\Delta = 4(n+1)^2 Q(x)$, where $x = n+1$ and $Q(x) = 4x^4 + x^2 - 12x - 8$. For all $x \ge 13$ (corresponding to $n \ge 12$), we again have the strict bound $(2x^2)^2 < Q(x) < (2x^2+1)^2$. Direct calculation of the remaining finite cases ($3 \le n \le 11$) yields no perfect squares.
    \end{enumerate}
\end{enumerate}

In all four cases, no admissible integer solutions exist.

\end{proof}

We now check that there is no collision between \(\mathrm{CII}\) and the four one-parameter families.
\begin{prop} \label{prop: collision check 4}
Let \(X^{\mathrm{CII}}_{p,q}=\mathrm{Sp}(p,q)/(\mathrm{Sp}(p)\times \mathrm{Sp}(q))\) with \(p\ge q\ge 2\). Then for each of the families \(\mathrm{AI},\mathrm{AII},\mathrm{DIII},\mathrm{CI}\), there is no collision of invariants:
\[
\bigl(\sigma_1(X),\sigma_2(X)\bigr)\neq \bigl(\sigma_1(X^{\mathrm{CII}}_{p,q}),\sigma_2(X^{\mathrm{CII}}_{p,q})\bigr)
\]
for every admissible parameter choice in those families.
\end{prop}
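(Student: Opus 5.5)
The plan is to follow the scheme of Propositions~\ref{prop: collision check 2} and~\ref{prop: collision check 3}. Put $s:=p+q$ and $M:=\sigma_1(X^{\mathrm{CII}}_{p,q})=4pq$. By Table~\ref{tab:symmetric-spaces-sigma invariant},
\[
\rho(X^{\mathrm{CII}}_{p,q})=4\frac{\sigma_2}{\sigma_1}=\frac{4pq+s}{(s+1)^2}.
\]
Since $4pq\le s^2$, this gives $\rho\le \frac{s^2+s}{(s+1)^2}=\frac{s}{s+1}<1$. The proof of Proposition~\ref{prop: collision check 1} shows $\rho>1$ on $\mathrm{AI}$ and on $\mathrm{CI}$. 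Equality of $(\sigma_1,\sigma_2)$ forces equality of $\rho$, so collisions with $\mathrm{AI}$ and $\mathrm{CI}$ are ruled out immediately. Only $\mathrm{AII}$ and $\mathrm{DIII}$ remain.

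\textbf{AII.} A collision means $M=(2n+1)(n-1)$ and $\frac{M+s}{(s+1)^2}=\frac{n-1}{n}$. Clearing denominators gives a quadratic in $s$:
\[
(n-1)s^2+(n-2)s+(n-1)-n(2n+1)(n-1)=0 .
\]
Reducing modulo $n-1$ leaves $-s\equiv 0$, so $s=k(n-1)$ with $k\ge1$. Substituting and dividing by $n-1$ yields
\[
k^2(n-1)^2+(n-2)k+1-n(2n+1)=0 .
\]
For $k=1$ the left side equals $-n^2-2n<0$. For $k\ge2$ it is at least $4(n-1)^2+2(n-2)+1-2n^2-n=2n^2-9n+1$, which is positive for $n\ge5$. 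I will then check $n=3,4$ directly, using that $M=20$ resp. $M=27$ must equal $4pq$ with $p\ge q\ge2$; for $n=4$ this already fails since $27$ is not divisible by $4$. So no admissible solution exists.

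\textbf{DIII.} A collision means $M=n(n-1)=4pq$, which requires $pq\ge4$ and hence $n\ge5$, together with
\[
(n(n-1)+s)(n-1)^2=(n^2-3n+4)(s+1)^2 .
\]
This is a quadratic in $s$ with leading coefficient $n^2-3n+4$. I will first try to extract a divisibility condition, as in the $\mathrm{AII}$ case. Since $\gcd\!\left((n-1)^2,\,n^2-3n+4\right)$ divides a small constant (the second quantity is $(n-1)^2-(n-3)$), a congruence should again make $s$ essentially a multiple of a factor such as $n-1$ or $(n-1)^2/\gcd$. If that does not come out cleanly, I will fall back on the discriminant method of Proposition~\ref{prop: collision check 3}. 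The discriminant of the quadratic in $s$ should factor as $(n-1)^2$ times a quartic $Q(n)$ with leading coefficient a perfect square. I will then show $Q$ lies strictly between two consecutive squares for $n$ beyond an explicit bound, and verify the finitely many small $n$ by hand.

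The main obstacle is this $\mathrm{DIII}$ case: actually computing $Q$ and finding the right pair of consecutive squares. Two constraints help cut down the finite range. First, $s^2\ge 4pq=M$. Second, the inequality $\frac{n^2-3n+4}{(n-1)^2}=\rho<\frac{s}{s+1}$ gives $s+1>\frac{(n-1)^2}{n-3}$, which forces $s$ to be about $n$ or larger. Combined with $s\le \frac{M}{8}+2$ from $q\ge2$, this should leave only a short range of $(n,s)$ to inspect.
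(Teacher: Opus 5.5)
Your proposal is correct and follows the paper's proof. The ratio bound $\rho<1$ disposes of $\mathrm{AI}$ and $\mathrm{CI}$, and the substitution $s=k(n-1)$ gives exactly the paper's quadratic $F_r(n)$ for $\mathrm{AII}$. For $\mathrm{DIII}$ the discriminant is indeed $(n-1)^2Q(x)$ with $x=n-1$ and $Q(x)=4x^4+x^2+12x-8$, which lies strictly between $(2x^2)^2$ and $(2x^2+1)^2$ for $x\ge4$; your observation that $4pq=n(n-1)\ge16$ forces $n\ge5$ therefore leaves no small cases to check.

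There are two harmless slips. For $n=3$ one has $M=(2n+1)(n-1)=14$, not $20$, and it fails because $4\nmid 14$. The value at $k=2$ is $2n^2-7n+1$ rather than $2n^2-9n+1$, but your expression is still a valid lower bound.
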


\begin{proof}

Let $s = p+q$ and $M = pq$. We utilize the normalized invariant ratio $\rho(X) = 4\frac{\sigma_2(X)}{\sigma_1(X)}$. For the CII family, substituting the invariants from Table ~\ref{tab:symmetric-spaces-sigma invariant} yields $\rho^{\mathrm{CII}} = \frac{4M+s}{(s+1)^2}$. Since $4M = 4pq \le (p+q)^2 = s^2$, we can strictly bound this ratio:$$ \rho^{\mathrm{CII}} \le \frac{s^2+s}{(s+1)^2} = \frac{s}{s+1} < 1. $$ We divide the possible collisions into three cases according to the method used to rule them out.

\begin{enumerate}
    \item $\mathrm{AI} \mbox{ and } \mathrm{CI}$ (ratio bounds): As established in the proof of Proposition ~\ref{prop: collision check 1}, the normalized ratios for both AI and CI are strictly greater than 1 ($\rho^{\mathrm{AI}} = 1 + 2/n > 1$ and $\rho^{\mathrm{CI}} = 1 + (n+3)/(n+1)^2 > 1$). Because $\rho^{\mathrm{CII}} < 1$, collisions are globally impossible. We bypass their Diophantine systems entirely.
    \item $\mathrm{AII}$ (Divisibility Constraints): The equality \(\sigma_1^{\mathrm{AII}}(n)=\sigma_1^{\mathrm{CII}}(p,q)\) gives us
$4M=(2n+1)(n-1)$. The equality of \(\sigma_2\)-invariants then simplifies to $\displaystyle (s+1)^2=n(2n+1)+s+\frac{s}{n-1}$. Since the left-hand side is an integer, \(r:=s/(n-1)\in\mathbb Z_{>0}\). Substituting $s = r(n-1)$ produces the quadratic $F_r(n) = (r^2-2)n^2 + (-2r^2+r-1)n + (r-1)^2 = 0$. For $r=1$, $F_1(n)=-n(n+2)\neq 0$. For $r=2$, $F_2'(n)=4n-7>0$ for $n\ge3$, so $F_2$ is increasing on $[3,\infty)$; since $F_2(3)=-2$ and $F_2(4)=5$, we have $F_2(n)\neq0$ for all integers $n\ge3$. For $r\ge3$, the derivative $F_r'(n)=2(r^2-2)n-2r^2+r-1$ is strictly positive for $n\ge3$, and $F_r(3)=4r^2+r-20>0$. Thus, $F_r(n)$ is strictly increasing on $[3,\infty)$ and $F_r(n)\ge F_r(3)>0$. Hence $F_r(n)$ never vanishes in the admissible range.
\item $\mathrm{DIII}$ (Discriminant Bounds):
Equating $\sigma_1$ sets $4M = n(n-1)$. The equality of the \(\sigma_2\) invariants produces a quadratic equation in $s$. Its discriminant factors as $\Delta = (n-1)^2 Q(x)$, where $x = n-1$ and $Q(x) = 4x^4 + x^2 + 12x - 8$. As shown in Proposition ~\ref{prop: collision check 3}, for $x \ge 4$ (corresponding to $n \ge 5$), $Q(x)$ is strictly bounded by consecutive perfect squares: $(2x^2)^2 < Q(x) < (2x^2+1)^2$, preventing $\Delta$ from being a square. Direct calculation of the remaining finite cases ($n \in \{3,4\}$) yields no valid parameters. 
\end{enumerate}
In every case, a collision is impossible.

\end{proof}

\begin{prop} \label{prop: collision check 5}
Within each of the families \emph{BDI}, \emph{AIII}, and \emph{CII}, the pair
\[
\bigl(\sigma_1(X),\sigma_2(X)\bigr)
\]
determines the parameters uniquely. In particular, if two spaces from the same one of these families have the same \(\sigma_1\)- and \(\sigma_2\)-invariants, then they are equal.
\end{prop}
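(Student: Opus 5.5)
Within each two-parameter family the data from Table~\ref{tab:symmetric-spaces-sigma invariant} are symmetric functions of $(p,q)$, so the natural plan is to pass to $M:=pq$ and $s:=p+q$. Since $\{p,q\}$ is the root set of $T^2-sT+M$, and the ordering $p\ge q$ is fixed, the unordered pair and hence the space are determined once $(M,s)$ is. First, $\sigma_1$ fixes $M$ in every case: $\sigma_1=2pq$ for AIII, $pq$ for BDI, $4pq$ for CII. It then remains to show that, for fixed $M$, the value $\sigma_2$ is a strictly monotone function of $s$ on the admissible range of $s$ (all $s$ with $s^2\ge 4M$ and, for BDI, $s\ge 10$).

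In each family, with $M$ fixed, I would write $\sigma_2$ as a function of the real variable $s$:
\[
\sigma_2^{\mathrm{AIII}}=\frac{2M(M+1)}{s^2},\qquad
\sigma_2^{\mathrm{CII}}=\frac{M(4M+s)}{(s+1)^2},\qquad
\sigma_2^{\mathrm{BDI}}=\frac{M(2M-s)}{2(s-2)^2}.
\]
\textbf{AIII.} Here $\sigma_2$ is strictly decreasing in $s>0$, so equal $\sigma_2$ forces equal $s$.

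\textbf{CII.} Differentiating $f(s)=(4M+s)/(s+1)^2$ gives a numerator proportional to $(s+1)-2(4M+s)=-(s+8M-1)<0$, so $f$ is strictly decreasing for $s>0$.

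\textbf{BDI.} For $g(s)=(2M-s)/(s-2)^2$ on $s>2$, the derivative has numerator $-(s-2)-2(2M-s)=s+2-4M$. Since $4M=4pq\ge 4\cdot 5\cdot 5=100$ while $s\le$ something we need to compare, I would argue as follows. One has $s=p+q\le pq=M$ once $q\ge 2$, so $s+2-4M\le M+2-4M<0$. Hence $g$ is strictly decreasing on the admissible range.

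In all three cases, $\sigma_2$ at fixed $\sigma_1$ is injective in $s$. Therefore $(\sigma_1,\sigma_2)$ determines $(M,s)$, and with it $\{p,q\}$ and so $(p,q)$ under $p\ge q$.

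The only mild obstacle is the BDI case, where the numerator $2M-s$ could a priori change sign. The monotonicity must therefore be checked on the correct domain. The bound $s\le M$ handles this, and $\sigma_2>0$ there anyway.
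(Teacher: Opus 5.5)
Your proposal is correct and follows essentially the same route as the paper. In both, $\sigma_1$ fixes $M=pq$, and for fixed $M$ the invariant $\sigma_2$ is strictly decreasing in $s=p+q$, with the same derivative numerators $1-s-8M$ for CII and $s+2-4M$ for BDI. The pair $(M,s)$ then recovers $(p,q)$ via $z^2-sz+M=0$.

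The only cosmetic difference is in BDI, where you use $s\le M$ (from $(p-1)(q-1)\ge 1$) instead of the paper's $s<2M$. Either bound places every realized value of $s$ inside the interval $(2,4M-2)$ on which $\sigma_2$ is decreasing. That interval is the domain that actually matters, rather than the larger range $s\ge 10$, $s^2\ge 4M$ you first wrote down.
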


\begin{proof}
    Let $(p,q)$ and $(p_1, q_1)$ be two valid parameter pairs for a single family such that $p \ge q$ and $p_1 \ge q_1$. Suppose they yield the same invariants $\sigma_1$ and $\sigma_2$. For all three families, Table ~\ref{tab:symmetric-spaces-sigma invariant} shows that $\sigma_1$ is a strict scalar multiple of the product $M = pq$. Therefore, the equality $\sigma_1(p,q) = \sigma_1(p_1,q_1)$ immediately forces the products to be equal: $pq = p_1 q_1 =: M$. Let $s = p+q$. Holding the product $M$ constant, we can express the second invariant strictly as a function of the sum, $\sigma_2 = F_M(s)$. We compute this function and its monotonicity for the admissible domains in each family:

\smallskip
\noindent\textbf{AIII} ($M \geq 4, s\geq 4$): 

$$F_M(s) = \frac{M(2M+2)}{s^2}.$$
Since $M$ is a positive constant, this function is strictly decreasing with respect to $s > 0$.

\smallskip
\noindent\textbf{BDI} ($M \geq 25, s\geq 10$):  $$F_M(s) = \frac{M(2M-s)}{2(s-2)^2} \implies F_M'(s) = \frac{M(s+2-4M)}{2(s-2)^3}.$$ 
Since $p, q \ge 5$, we have $pq \ge 5p > p$ and $pq \ge 5q > q$. Summing these yields $2M > p+q = s$. Therefore, $s - 4M < -2M$. Since $M \ge 25$, the numerator factor $(s + 2 - 4M)$ is strictly negative, ensuring $F_M'(s) < 0$.

\smallskip
\noindent\textbf{CII} ($M \geq 4, s\geq 4$):
$$F_M(s) = \frac{M(4M+s)}{(s+1)^2} \implies F_M'(s) = \frac{M(1-s-8M)}{(s+1)^3}.$$
Since $M$ and $s$ are both strictly positive (with $M \ge 4$), the term $(1 - s - 8M)$ is negative. Thus, the numerator is strictly negative, ensuring $F_M'(s) < 0$.

In all three cases, $F_M(s)$ is strictly decreasing. Therefore, the equality $F_M(p+q) = F_M(p_1+q_1)$ uniquely determines the sum: $p+q = p_1+q_1 = s$. Because the pairs $(p,q)$ and $(p_1,q_1)$ share the exact same sum $s$ and product $M$, they are the roots of the same quadratic equation $z^2 - sz + M = 0$. Given our ordering convention $p \ge q$ and $p_1 \ge q_1$, it follows that $p = p_1$ and $q = q_1$.
\end{proof}
Having ruled out collisions involving the one-parameter families, as
well as collisions within each individual two-parameter family, the
only remaining possibilities are the mixed-family collisions among $\mathrm{AIII}$, $\mathrm{BDI}$, and $\mathrm{CII}$.

 \bibliographystyle{amsplain} 
\bibliography{references}    

\end{document}